\documentclass{amsart}

\usepackage{graphicx, xcolor}
\usepackage{amssymb}
\newcommand{\A}{\Bbb A}
\newcommand{\BA}{\Bbb A}
\newcommand{\C}{\Bbb C}
\newcommand{\R}{\Bbb R}

\newcommand{\Z}{\Bbb Z}

\newcommand{\tr}{\mathrm{tr}\,}

\newcommand{\D}{\Delta}

\newcommand{\la}{\langle}
\newcommand{\ra}{\rangle}

\newcommand{\SL}{\mathrm{SL}_2(\C)}
\newcommand{\SLt}{\mathrm{SL}_3(\C)}
\newcommand{\SLn}{\mathrm{SL}_n(\C)}

\numberwithin{equation}{section}
\newtheorem{theorem}{Theorem}[section]
\newtheorem{proposition}[theorem]{Proposition}
\newtheorem{corollary}[theorem]{Corollary}
\newtheorem{lemma}[theorem]{Lemma}

\theoremstyle{definition}

\newtheorem{example}[theorem]{Example}
\newtheorem{remark}[theorem]{Remark}

\allowdisplaybreaks[4]

\begin{document}

\title
[Twisted Alexander polynomial and Reidemeister torsion]
{Algebraic properties of twisted Alexander polynomial and Reidemeister torsion of torus knots}

\author{Takayuki Morifuji and Anh T. Tran}
\thanks{2020 \textit{Mathematics Subject Classification}.\/ 
Primary 57K31; Secondary 57K14, 57M05}

\thanks{{\it Key words and phrases.\/ 
Twisted Alexander polynomial, Reidemeister torsion, torus knot, character variety.}}

\address{Department of Mathematics, 
Hiyoshi Campus, Keio University, 
Yokohama 223-8521, Japan}
\email{morifuji@keio.jp}

\address{Department of Mathematical Sciences, The University of Texas at Dallas, 
Richardson, TX 75080, USA}
\email{att140830@utdallas.edu}

\begin{abstract}
In this paper we prove that every coefficient of twisted Alexander polynomials of torus knots associated with irreducible $\SLn$-representations is an $\A$-valued locally constant function on the $\SLn$-character variety, where $\A$ is the ring of all algebraic integers over $\C$. Moreover, as a generalization of a recent result of Kitano and Nozaki, we show that $\SLn$-Reidemeister torsions are algebraic integers for many Seifert fibered spaces. Also, we discuss the power sums of Reidemeister torsions of torus knots for low-dimensional irreducible representations that provide a mysterious relation to TQFT. 
\end{abstract}

\maketitle
\section{Introduction}


The twisted Alexander polynomial is a generalization of the classical Alexander polynomial of knots, which was first introduced by Lin \cite{Lin01-1} for  knot groups and by Wada \cite{Wada94-1} for finitely presentable groups. Here, we note that this invariant is defined for a pair of a group and its representation. Recently, it has been widely studied and has become a powerful tool in knot theory and low dimensional topology. For recent developments on this topic and related references, see the survey papers \cite{FV10-1} and \cite{Morifuji15-1}. 

In this paper, we consider the twisted Alexander polynomial $\D_K^\rho(t)\in\C[t^{\pm1}]$ associated with an irreducible representation $\rho\colon G(K)\to \SLn$ of the group $G(K)$ of a knot $K$ in the $3$-sphere $S^3$. 
In particular, we investigate the behavior of $\D_K^\rho(t)$ as a function on the space of irreducible $\SLn$-representations. 
Regarding this property, it is known that for certain hyperbolic knots, each coefficient of $\D_K^\rho(t)$ varies continuously on the $\SL$-character variety (see \cite{GM03-1}). 
On the other hand, such a phenomenon on the $\SL$-character variety does not occur for any torus knot $K_{p,q}$, where $p$ and $q$ are coprime positive integers. 
In fact, it was shown in \cite{KtM12-1}  that every coefficient of the twisted Alexander polynomial $\D_{K_{p,q}}^\rho(t)$ is a locally constant function on the space of irreducible $\SL$-representations. 
Moreover, we verified in \cite{KMT20-1} that the same property also holds for torus links, when the twisted Alexander polynomial is associated with irreducible $\SLn$-representations that factor through $\SL$. These results are quite interesting in contrast to geometric structures (hyperbolic structure and other geometric structures) in the sense of W. Thurston. 

Now, there are two purposes of this paper. The first one is to generalize the above property of torus knots to higher-dimensional representations. In addition, we show that the coefficients of the twisted Alexander polynomial $\D_{K_{p,q}}^\rho(t)$ admit an interesting algebraic property for irreducible $\SLn$-reprensetations. Here, let us recall that a complex number $\gamma$ is called an \textit{algebraic integer} if there is a monic polynomial over $\Z$ such that $\gamma$ is a root of the polynomial. We denote the ring of all algebraic integers by $\A$, and the knot group $G(K_{p,q})$ by $G_{p,q}$. Moreover, let $\mathcal{X}_n=X(G_{p,q},\mathrm{SL}_n(\C))$ be the $\SLn$-character variety of $G_{p,q}$, and $\mathcal{X}_n^*$ its subvariety consisting of irreducible representations (see Subsection \ref{subsec:2.3} for details). Our first result of this paper is the following. 

\begin{theorem}\label{thm:main}
The coefficients of $\D_{K_{p,q}}^\rho(t)$ associated with irreducible representations $\rho\colon G_{p,q}\to\SLn$ are locally constant on 
$\mathcal{X}_n^*$. Moreover, $\D_{K_{p,q}}^\rho(t)\in\A[t^{\pm1}]$ holds.  
\end{theorem}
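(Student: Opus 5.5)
We work with the presentation $G_{p,q}=\la x,y \mid x^p=y^q\ra$. The element $z=x^p=y^q$ is central, so for an irreducible $\rho$ Schur's lemma gives $\rho(z)=\zeta I$ for a scalar $\zeta$ with $\zeta^n=1$. Consequently $X=\rho(x)$ satisfies $X^p=\zeta I$ and $Y=\rho(y)$ satisfies $Y^q=\zeta I$. Both are diagonalizable, with eigenvalues among the roots of unity $\omega$ satisfying $\omega^{p}=\zeta$ (respectively $\omega^q=\zeta$). In particular every eigenvalue of $X$ and of $Y$ is a root of unity of order dividing $pn$ (respectively $qn$).

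Next we compute $\D_{K_{p,q}}^\rho(t)$ with Wada's formula, using the single relator $r=x^py^{-q}$ and the abelianization $x\mapsto t^q$, $y\mapsto t^p$. Fox calculus gives
\[
\frac{\partial r}{\partial x}=1+x+\cdots+x^{p-1},
\]
so the Wada numerator is $\det\Phi(\partial r/\partial y)$, up to sign and units, where $\partial r/\partial y=-x^py^{-q}(1+y+\cdots+y^{q-1})$. The denominator is $\det(\Phi(x)-I)=\det(t^qX-I)$, or the analogous one for $y$. Since $\rho(x^py^{-q})=I$, we get
\[
\D^\rho(t)=\pm t^{*}\,\frac{\det\bigl(I+t^pY+\cdots+(t^pY)^{q-1}\bigr)}{\det(t^qX-I)}.
\]
Diagonalizing $Y$, the numerator becomes $\prod_j \bigl(1-(t^p\eta_j)^q\bigr)/(1-t^p\eta_j)$, where the $\eta_j$ are the eigenvalues of $Y$. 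This equals
\[
\frac{(1-\zeta t^{pq})^n}{\prod_j(1-t^p\eta_j)}.
\]
Similarly, the denominator is $\pm\prod_i(1-t^q\xi_i)$, where the $\xi_i$ are the eigenvalues of $X$. Therefore
\[
\D^\rho(t)=\pm t^{*}\frac{(1-\zeta t^{pq})^n}{\prod_i(1-t^q\xi_i)\prod_j(1-t^p\eta_j)}.
\]
This expression depends only on the discrete data $(\zeta,\{\xi_i\},\{\eta_j\})$.

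For local constancy, note that this data is determined by the character of $\rho$. The eigenvalue multisets of $X$ and $Y$ are determined by $\tr \rho(x^k)$ and $\tr\rho(y^k)$, which are regular functions on $\mathcal{X}_n$, and $\zeta$ is determined by $\tr\rho(z)/n$. These functions take values in the finite set of sums of roots of unity of bounded order. Continuous functions with values in a finite set are locally constant, so the multisets are locally constant on $\mathcal{X}_n^*$, and hence so are the coefficients of $\D^\rho(t)$.

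For algebraicity, $\D^\rho(t)$ is known to be a Laurent polynomial for irreducible (nonabelian) $\rho$, by Kitano–Morifuji. So the rational function above is a genuine polynomial quotient. The numerator $(1-\zeta t^{pq})^n$ and the denominator both have coefficients in $\A$ because they involve roots of unity. The denominator can be taken to have constant term $1$, and dividing a power series with coefficients in $\A$ by a polynomial whose constant term is a unit of $\A$ keeps the coefficients in $\A$. Since the quotient is a polynomial, it equals this power-series expansion, and therefore lies in $\A[t^{\pm1}]$.

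The main obstacle is the care needed around polynomiality and the choice of denominator: we must ensure $\det(t^qX-I)\neq 0$, which holds automatically since it is a nonzero polynomial in $t$. We also need the cancellation to be exact. If the Laurent-polynomial fact is not available, I would show divisibility directly by matching eigenvalues: each root $\xi_i^{-1/q}$ of a denominator factor must be a root of $1-\zeta t^{pq}$, with enough multiplicity. Irreducibility is used here, to rule out a common eigenvector of $X$ and $Y$ that would give excess multiplicity.
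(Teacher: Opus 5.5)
Your proof is correct in substance, but it reaches both conclusions by a different route than the paper.

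\textbf{Local constancy.} The paper relies on the decomposition (\ref{eq:1}) of $\mathcal{X}_n^*$ into connected pieces on which the eigenvalue data of $X$ and $Y$ is fixed. Your observation gets this intrinsically, without that classification: $\tr\rho(x^k)$, $\tr\rho(y^k)$ and $\tr\rho(x^p)$ are regular functions that take only finitely many values on $\mathcal{X}_n^*$.

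\textbf{Integrality.} The paper proves polynomiality directly. Irreducibility forces the multiplicity $v_i$ of an eigenvalue of $X$ and the multiplicity $w_j$ of an eigenvalue of $Y$ to satisfy $v_i+w_j\le n$; otherwise the two eigenspaces meet in a nonzero invariant subspace. This lets the paper write $\D_{K_{p,q}}^\rho(t)$ as an explicit product of factors $\gamma t^{q}-1$, $\gamma t^{p}-1$, $\gamma t^{pq}-1$ with $\gamma$ roots of unity, and that factorization also feeds the explicit formulas of the later sections. Your step (a Laurent polynomial of the form $N/D$ with $N,D\in\A[t]$ and $D(0)=1$ lies in $\A[t]$) is a cleaner general principle. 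However, it takes polynomiality as input.

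\textbf{The citation for polynomiality.} The reference you give does not cover the statement: Kitano--Morifuji's theorem is stated for nonabelian $\mathrm{SL}_2$-representations, so it says nothing for $n\ge 3$. You can repair this in either of two ways.

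\emph{A general argument.} For irreducible $\rho$ with $n\ge 2$, consider the span of the images of $\rho(g)-I$ for $g\in[G_{p,q},G_{p,q}]$. It is $\rho$-invariant, and it is nonzero because $\rho(G_{p,q})$ is non-abelian, so it equals $\C^n$. Hence the twisted $H_0$ vanishes, $\Delta_0=1$, and Wada's invariant $\Delta_1/\Delta_0$ is a Laurent polynomial.

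\emph{Completing your fallback.} This is exactly the paper's argument.
\begin{enumerate}
\item Each factor $1-t^q\xi$ and $1-t^p\eta$ has simple roots. So a root $t_0$ of the denominator occurs there with multiplicity equal to the multiplicity of $t_0^{-q}$ as an eigenvalue of $X$ plus that of $t_0^{-p}$ as an eigenvalue of $Y$.
\item In the numerator $(1-\zeta t^{pq})^n$, the root $t_0$ has multiplicity exactly $n$.
\item If the denominator multiplicity exceeded $n$, the two eigenspaces (of those dimensions, by diagonalizability) would share a nonzero vector. That vector spans a $\rho$-invariant line, contradicting irreducibility.
\end{enumerate}
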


It might be worth mentioning that Dunfiled, Friedl and Jackson observed in \cite{DFJ12-1} that the coefficients of the twisted Alexander polynomial of a hyperbolic knot associated with a lift of the holonomy representation are often algebraic integers. 

The second purpose is to study an algebraic property of Reidemeister torsion for Seifert fibered spaces, namely, $3$-manifolds together with a decomposition as a disjoint union of circles. Reidemeister torsion is one of the classical invariants of $3$-manifolds which was introduced for the purpose of classifying lens spaces. Later, Milnor showed that the classical Alexander polynomial can be interpreted as a Reidemeister torsion. 
This framework is extended to the twisted case by using representations of the fundamental group of a $3$-manifold. Actually, the twisted Alexander polynomial is interpreted as a Reidemeister torsion at various levels (see \cite{DFJ12-1}, \cite{FV10-1}, \cite{Kitano96-1} for example). In our setting, the Reidemeister torsion $\tau_\rho(E(K))$ of the exterior $E(K)$ of a knot $K\subset S^3$ coincides with $\D_K^\rho(1)\in\C$ up to sign (see Subsection \ref{subsec:2.2} for details). 

Recently, Kitano and Nozaki studied in \cite{KN22-1} an interesting algebraic property of the $\SL$-Reidemeister torsion. In fact, they exhibited that Reidemeister torsions of many $3$-manifolds, which include Seifert fibered spaces, are not only algebraic numbers but also algebraic integers for irreducible $\SL$-representations. 
As an immediate corollary of Theorem \ref{thm:main}, we can generalize Proposition 4.9 of \cite {KN22-1} as follows. 

\begin{corollary}\label{cor:torus}
The Reidemeister torsion $\tau_\rho(E(K_{p,q}))$ associated with irreducible representations $\rho\colon G_{p,q}\to\SLn$ is locally constant on $\mathcal{X}_n^*$. Moreover, $\tau_\rho(E(K_{p,q}))$ is an algebraic integer. 
\end{corollary}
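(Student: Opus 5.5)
The corollary follows from Theorem \ref{thm:main} once we use the identification of the torsion with the twisted Alexander polynomial at $t=1$, recalled in Subsection \ref{subsec:2.2}: for a representation $\rho\colon G_{p,q}\to\SLn$ with $\tau_\rho(E(K_{p,q}))$ defined, we have $\tau_\rho(E(K_{p,q}))=\pm\D_{K_{p,q}}^\rho(1)$. The plan is to evaluate the Laurent polynomial from Theorem \ref{thm:main} at $t=1$ and check that both properties survive this evaluation.

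\textbf{Step 1: algebraicity.} Write $\D_{K_{p,q}}^\rho(t)=\sum_i a_i(\rho)t^i$, a finite sum. By Theorem \ref{thm:main}, each $a_i(\rho)$ lies in $\A$. Then $\D_{K_{p,q}}^\rho(1)=\sum_i a_i(\rho)$ is a finite sum of algebraic integers. Since $\A$ is a ring, this sum is an algebraic integer. Its negative is also in $\A$, so the sign ambiguity does not matter.

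\textbf{Step 2: local constancy.} Each coefficient $a_i$ is locally constant on $\mathcal{X}_n^*$, so near any point all $a_i$ are constant. We need one more fact: the number of nonzero coefficients, that is, the degree range of $\D^\rho(t)$, is uniformly bounded. This follows because $\D^\rho$ is the quotient of two determinants coming from a fixed Wada presentation $\la x,y\mid x^p=y^q\ra$, whose $t$-degrees are bounded in terms of $n,p,q$. Therefore the finite sum $\sum_i a_i(\rho)$ is locally constant.

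\textbf{Step 3: handling the sign.} The sign in $\tau_\rho=\pm\D^\rho(1)$ depends only on the homology orientation convention and on $n$ (for instance, through $(-1)^n$-type factors). It therefore does not vary with $\rho$, so local constancy passes from $\D^\rho(1)$ to $\tau_\rho$.

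\textbf{Expected obstacle.} The only subtle point is whether $\tau_\rho$ is actually defined for every irreducible $\rho$, that is, whether the twisted chain complex is acyclic, equivalently $\D^\rho(1)\neq0$. If the paper defines $\tau_\rho$ as $\pm\D^\rho(1)$ whenever it makes sense, the argument goes through directly. Otherwise, I would first try to verify acyclicity using the explicit formula from Theorem \ref{thm:main}'s proof. That formula expresses $\D^\rho(t)$ through the eigenvalues of $\rho(x)$ and $\rho(y)$, which are $p$-th and $q$-th roots of a central scalar. I expect that $t=1$ is not a root when $\rho$ is irreducible, since irreducibility excludes the eigenvalue $1$ for $\rho(\text{meridian})$ in a suitable sense.
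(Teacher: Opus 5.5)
Your Steps 1 and 2 are the paper's proof. You evaluate the Laurent polynomial of Theorem \ref{thm:main} at $t=1$ via $\tau_\rho(E(K_{p,q}))=\pm\D_{K_{p,q}}^\rho(1)$, so the value lies in $\A$ and is locally constant. The paper gets constancy on each component $\mathcal{X}^*_{n,\kappa}$ directly, since the whole polynomial is determined by the eigenvalue data; your uniform degree bound is a valid substitute. Step 3 is unnecessary: $\tau_\rho$ is only defined up to sign, and you give no real argument that the sign is independent of $\rho$, so you should simply drop it.

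The real problem is your ``expected obstacle'' paragraph: its fallback plan would fail, because irreducible representations of $G_{p,q}$ need not be acyclic, and $t=1$ can be a root of $\D^\rho_{K_{p,q}}(t)$. Two examples from the paper show this:
\begin{itemize}
\item For the trefoil, every irreducible $\SLt$-representation has $\D_{K_{2,3}}^\rho(t)=t^3-1$ (Section \ref{section:5}), which vanishes at $t=1$.
\item For $n=2$, the components $\mathcal{X}^*_{2,(a,b)}$ with $a,b$ even, e.g.\ $(a,b)=(2,2)$ for $K_{3,4}$, consist entirely of non-acyclic representations.
\end{itemize}
So you cannot verify acyclicity in general. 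What makes the argument work is the convention of Subsection \ref{subsec:2.2} that $\tau_\rho=0$ for non-acyclic $\rho$. Add to this the fact, recalled in the same subsection, that $\tau_\rho(E(K))=\pm\D_K^\rho(1)$ also holds in the non-acyclic case. With these two ingredients your Steps 1--2 apply verbatim to every irreducible $\rho$, and the value $0$ is trivially an algebraic integer. Your first branch (``if the paper defines $\tau_\rho$ as $\pm\D^\rho(1)$ whenever it makes sense'') is the right one, but it needs exactly this non-acyclic clause, not an acyclicity check.
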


Here, we note that the exterior $E(K_{p,q})$ is a Seifert fibered space whose base orbifold has genus zero. Using the result of Kitano (see \cite[Main Theorem]{Kitano96-1}), which provides an explicit formula for the $\SLn$-Reidemeister torsion of a Seifert fibered space over the orbifold with positive genus $g$, we can show the following. 

\begin{proposition}\label{pro:g>0}
Let $N$ be a Seifert fibered space with $g\geq1$. 
For any irreducible representation $\rho\colon \pi_1(N)\to\SLn$, the Reidemeister torsion $\tau_\rho(N)$ is an algebraic integer. 
\end{proposition}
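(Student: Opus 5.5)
We plan to use Kitano's explicit formula \cite[Main Theorem]{Kitano96-1}. Write
\[
\pi_1(N)=\langle a_1,b_1,\dots,a_g,b_g,q_1,\dots,q_m,h \mid h \text{ central},\ q_j^{\alpha_j}h^{\beta_j}=1,\ q_1\cdots q_m[a_1,b_1]\cdots[a_g,b_g]=h^{b}\rangle
\]
(with the usual modification if $N$ has boundary). Since $\rho$ is irreducible and $h$ is central, Schur's lemma gives $\rho(h)=\zeta I$ with $\zeta^n=1$. Hence $\rho(q_j)^{\alpha_j}=\zeta^{-\beta_j}I$, so each $\rho(q_j)$ is diagonalizable with eigenvalues that are roots of unity. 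Kitano's formula expresses $\tau_\rho(N)$ as a product
\[
\tau_\rho(N)=\det(\rho(h)-I)^{2-2g-m}\prod_{j=1}^m \det\bigl(\rho(q_j^{\gamma_j}h^{\delta_j})-I\bigr),
\]
where $\alpha_j\delta_j-\beta_j\gamma_j=1$. Here $\rho(q_j^{\gamma_j}h^{\delta_j})$ is the image of the regular fiber's dual class, i.e.\ the core of the $j$-th exceptional fiber. The torsion is defined (acyclicity) exactly when $\det(\rho(h)-I)\neq0$ and these factors are nonzero.

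The key point is the sign of the exponent. Since $g\ge1$, we have $2-2g-m\le -m\le 0$. So $\tau_\rho(N)$ is a product of determinants $\det(A_j-I)$ and a \emph{nonpositive} power of $\det(\zeta I-I)=(\zeta-1)^n$. Each $\det(A_j-I)=\prod(\mu-1)$ over eigenvalues $\mu$ that are roots of unity, hence lies in $\A$. The remaining task is to control the inverse powers of $(\zeta-1)^n$.

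That is the main obstacle: $(\zeta-1)^{-1}$ need not be an algebraic integer. For example, if $\zeta$ is a primitive $p^k$-th root of unity, then $\zeta-1$ divides $p$ but is not a unit. We will offset these inverses using the numerator factors. Note that $A_j^{\alpha_j}=\rho(q_j)^{\alpha_j\gamma_j}\rho(h)^{\alpha_j\delta_j}=\zeta^{-\beta_j\gamma_j+\alpha_j\delta_j}I=\zeta I$. So every eigenvalue $\mu$ of $A_j$ satisfies $\mu^{\alpha_j}=\zeta$, and therefore
\[
\zeta-1=\mu^{\alpha_j}-1=(\mu-1)(1+\mu+\dots+\mu^{\alpha_j-1}).
\]
This gives $(\mu-1)^{-1}(\zeta-1)\in\A$, but we need the opposite direction. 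The fix is to regroup: for each $j$, $\det(A_j-I)/(\zeta-1)^n=\prod_\mu (\mu-1)/(\mu^{\alpha_j}-1)=\prod_\mu(1+\mu+\dots+\mu^{\alpha_j-1})^{-1}$, which is again problematic.

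So we will instead examine Kitano's formula more carefully. The precise form is presumably
\[
\tau_\rho(N)=\det(\rho(h)-I)^{2g-2+m}\prod_j\det(A_j-I)^{-1},
\]
i.e.\ the exceptional-fiber factors sit in the denominator and the genus term in the numerator. With that orientation the plan is as follows. The factor $(\zeta-1)^{n(2g-2+m)}$ splits as $(\zeta-1)^{n(2g-2)}\prod_j(\zeta-1)^n$. Each pair $(\zeta-1)^n/\det(A_j-I)=\prod_\mu(1+\mu+\dots+\mu^{\alpha_j-1})\in\A$ by the factorization above, and $(\zeta-1)^{n(2g-2)}\in\A$ because $g\ge1$. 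This is exactly where the hypothesis $g\geq1$ enters, and it is why the genus-zero case needs the separate treatment via Corollary~\ref{cor:torus}. The product is then an algebraic integer. We will double-check the normalization and sign conventions in \cite{Kitano96-1}, including the boundary case, where the exponent becomes $2g-2+m+1$ or similar and stays nonnegative.
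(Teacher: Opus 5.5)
Your final argument is the paper's proof. You split $(\zeta-1)^{n(2g-2+m)}$ into $(\zeta-1)^{n(2g-2)}\in\A$ (this is where $g\ge1$ is used) and one factor $(\zeta-1)^n$ per exceptional fiber, then use $A_j^{\alpha_j}=\zeta$ to get $(\zeta-1)^n/\det(A_j-I)=\prod_\mu(1+\mu+\cdots+\mu^{\alpha_j-1})\in\A$. The paper normalizes by $\alpha_j\nu_j-\beta_j\mu_j=-1$ and so gets $\omega^{-1}$ instead, which is harmless since $\omega^{-1}-1=-\omega^{-1}(\omega-1)$.

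The orientation you ``presumed'' is forced by the convention of Subsection~\ref{subsec:2.2} (exponent $(-1)^{j+1}$). This is the inverse of Kitano's normalization, as the paper notes when quoting his formula. Your first-written version is Kitano's original form, and under it the claim fails in general: for instance $g=2$, $m=0$, $n=2$, $\zeta=-1$ gives $2^{-4}$.
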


As Kitano and Nozaki pointed out in \cite{KN22-1}, there are pieces of evidence that $\SL$-Reidemeister torsions admit some interesting algebraic properties, which are quite non-trivial from their definition. Our Corollary \ref{cor:torus} and Proposition \ref{pro:g>0} suggest that this kind of algebraic properties of Reidemeister torsions hold true not only for two-dimensional representations but also for higher-dimensional representations. 

On the other hand, Yoon proved in \cite{Yoon22-1} a vanishing identity on the \textit{inverse sum} of adjoint Reidemeister torsions (with respect to a meridian) of two-bridge knots. This result implies that Reidemeister torsions of knots are imposed on certain strong algebraic restrictions. In this direction, we discuss the \textit{power sums} of Reidemeister torsions of torus knots for irreducible $\mathrm{SL}_2(\C)$-representations and their adjoint representations. We then  observe that these values of the power sums are imposed on certain algebraic restrictions. In particular, the integrality of the \textit{positive power sums} of Reidemeister torsions provides a partial answer to Conjecture 3.1 in \cite{GKY21-1} coming from a physical background. 

This paper is organized as follows. In Section~\ref{section:2}, we briefly review the definitions of the twisted Alexander polynomial $\D_K^\rho(t)$ and the Reidemeister torsion $\tau_\rho(N)$, and describe the $\SLn$-character variety of the knot group $G_{p,q}$ following \cite{MP16-1}. 
In Section~\ref{section:3}, we provide a formula for $\D_{K_{p,q}}^\rho(t)$, and prove Theorem \ref{thm:main} and Corollary \ref{cor:torus}. Moreover, we provide an explicit formula for the twisted Alexander polynomial on components of maximal dimension of the $\SLn$-character variety  of $K_{p,q}$. 
In Section~\ref{section:5}, 
we make a more detailed study of the twisted Alexander polynomial of torus knots of types $(2,q)$ and $(3,q)$ associated with irreducible $\SLt$-representations, which include the adjoint representation. 
In Section \ref{section:torsion}, we review the formula for $\SLn$-Reidemeister torsions of Seifert fibered spaces, due to Kitano \cite{Kitano96-2}, and give a proof of Proposition \ref{pro:g>0}. In Section ~\ref{section:6}, we exhibit some non-trivial algebraic restrictions on the power sums of Reidemeister torsions of torus knots. 

\section{Preriminaries}\label{section:2}

\subsection{Twisted Alexander polynomial}

Let $K$ be a knot in the $3$-sphere $S^3$. 
We denote $\pi_1(E(K))$ by $G(K)$, where $E(K)=\overline{S^3\setminus\nu(K)}$ denotes the exterior of $K$, and call it the \textit{knot group}\, of $K$. Then $G(K)$ admits a presentation of deficiency one (for example, a Wirtinger presentation), and we fix such a presentation  $G(K)=\la x_1,\ldots,x_\ell\,|\,r_1,\ldots,r_{\ell-1}\ra$. 
Let $\phi\colon G(K)\to \Z\cong\la t\ra$ be the abelianization. 

For a given irreducible representation $\rho\colon G(K)\to \SLn$, 
we extend the group homomorphism $\phi\otimes\rho\colon G(K)\to \mathrm{GL}_n(\C[t^{\pm1}])$, 
which is defined by $(\phi\otimes\rho)(x)=\phi(x)\rho(x)$ for $x\in G(K)$, to a ring homomorphism $\widetilde{\phi\otimes\rho}\colon\Z[G(K)]\to \mathrm{Mat}_n(\C[t^{\pm1}])$, 
where the target is the matrix algebra over $\C[t^{\pm1}]$. 
Let $F_\ell$ denote the free group $\la x_1,\ldots,x_\ell\ra$ 
and $\Phi\colon \Z[F_\ell]\to \mathrm{Mat}_n(\C[t^{\pm1}])$ the composition of 
the surjection $\Z[F_\ell]\to\Z[G(K)]$ induced by the presentation 
of $G(K)$ and $\widetilde{\phi\otimes\rho}$. 

Let $M$ denote the $(\ell-1)\times \ell$ matrix 
whose $(i,j)$-entry is the $n\times n$ matrix
$\Phi\left(\frac{\partial r_i}{\partial x_j}\right)
\in \mathrm{Mat}_n({\C}[t^{\pm1}])$, 
where
$\frac{\partial}{\partial x_j}$
denotes the free differential by $x_j$. 
For $1\leq k\leq \ell$,
we denote by $M_k$
the $(\ell-1)\times(\ell-1)$ matrix obtained from $M$
by removing the $k$-th column.
We regard $M_k$ as
an $n(\ell-1)\times n(\ell-1)$ matrix with coefficients in
${\C}[t^{\pm1}]$. 
Then Wada \cite{Wada94-1} defined the \textit{twisted Alexander polynomial}\, 
$\D_{K}^\rho(t)$ associated with $\rho\colon G(K)\to\SLn$ to be 
$$
\D_{K}^\rho(t)
=
\frac{\det M_k}{\det\Phi(x_k-1)},
$$
which is well-defined up to multiplication by $\pm t^{i}~(i\in\Z)$. 

\subsection{Reidemeister torsion}\label{subsec:2.2}

Let $N$ be a connected compact $3$-manifold with empty or toroidal boundary, and $\rho\colon\pi_1(N)\to\mathrm{SL}_n(\C)$ a representation. Suppose $\rho$ is acyclic; i.e., $H_*(N;\C_\rho^n)=0$, where $\C_\rho^n$ is a $\pi_1(N)$-module defined by using the action of $\rho(\pi_1(N))$ on the vector space $\C^n$. We endow $N$ with a cell decomposition so that $N$ is a CW-complex. It gives the cellular chain complex $\{C_*(N;\C_\rho^n),\partial_*\}$ with an ordered basis ${\bf c}_j$ of $C_j(N;\C_\rho^n)$ coming from $j$-cells. Next, we choose a basis ${\bf b}_j$ of $\mathrm{Im}\,\partial_{j+1}$ and its lift $\widetilde{{\bf b}}_j$ to $C_{j+1}(N;\C_\rho^n)$. The acyclicity implies that the union ${\bf b}_j\widetilde{{\bf b}}_{j-1}$ is a basis of $C_j(N;\C_\rho^n)$. We denote by $[{\bf b}_j\widetilde{\bf b}_{j-1}/{\bf c}_j]$ the change of basis matrix from ${\bf c}_j$ to ${\bf b}_j\widetilde{\bf b}_{j-1}$. The Reidemeister torsion $\tau_\rho(N)$ of $N$ associated with $\rho$ is defined by the alternating product 
$$
\tau_\rho(N)
=
\prod_{j=0}^3\big(\det [{\bf b}_j\widetilde{\bf b}_{j-1}/{\bf c}_j]\big)^{(-1)^{j+1}}\in\C\setminus\{0\},
$$
which is well-defined up to sign. When $\rho$ is non-acyclic, we simply define $\tau_\rho(N)=0$. 

If $N=E(K)$, the exterior of a knot $K$ in $S^3$, it is known that $\tau_\rho(E(K))=\D_K^\rho(1)$ holds up to sign for both the acyclic and non-acyclic cases whenever $\det\Phi(x_k-1)$ is nonzero (see the proof of \cite[Proposition 2.7]{DFJ12-1} and \cite[Proposition 3.1]{Kitano96-1}).

\subsection{$\SLn$-character variety of torus knots}\label{subsec:2.3}

We denote the torus knot of type $(p,q)$ in $S^3$ by $K_{p,q}$, where $(p,q)$ is a pair of coprime positive integers. Namely, $K_{p,q}$ is the image of the straight line $y=\frac{q}{p}x$ in $T^2=\R^2/\Z^2$ which is standardly embedded into $S^3$. 

Let us recall the knot group $G_{p,q}=G(K_{p,q})$ has a presentation $\la x,y\,|\, x^p=y^q\ra$. 
Following \cite[Sections~4 and 5]{MP16-1}, 
we quickly review a description of the character variety 
$$
\mathcal{X}_n=X(G_{p,q},\SLn),
$$ 
which is the image of the space of representations 
$\mathrm{Hom}(G_{p,q},\SLn)$ under the character map $\rho\mapsto \chi_\rho$, 
where $\chi_\rho(g)=\tr \rho(g),\,g\in G_{p,q}$. 

Let $\pi=(n_1,\overset{(a_1)}{\ldots},n_1,\ldots,n_s,\overset{(a_s)}{\ldots},n_s)$ be a \textit{partition} of $n$; that is, 
$a_1n_1+\cdots+a_sn_s=n$ with $n_1>\cdots>n_s>0$ and $a_j\geq 1$. 
Let $\Pi_n$ be the set of all partitions of $n$. We decompose the character variety 
$$
\mathcal{X}_n=\bigsqcup_{\pi\in\Pi_n}\mathcal{X}_{\pi}
$$
into locally closed subvarieties, where $\mathcal{X}_\pi$ corresponds to representations
$$
\rho=\bigoplus_{j=1}^{s}\bigoplus_{l=1}^{a_j}\rho_{j,l}\colon
G_{p,q} \to \mathrm{SL}_{n_j}(\C). 
$$
Note that the irreducible representations correspond to $\pi_0=(n)$. 
We denote $\mathcal{X}_n^*=\mathcal{X}_{\pi_0}$. 
If $(A,B)\in \mathcal{X}_n^*$, then $A$ and $B$ are both diagonalizable, and 
$A^p=B^q=\omega I_n$ for some $n$-th root of unity $\omega$, where $I_n$ is   the $n \times n$ identity matrix. Moreover, neither $A$ nor $B$ is a multiple of the identity matrix. 

Let $\epsilon_1,\ldots,\epsilon_s$ and $\varepsilon_1,\ldots,\varepsilon_l$ be the eigenvalues for $A$ and $B$ satisfying $\epsilon_i^p=\varepsilon_j^q=\omega$. 
We denote 
\begin{equation}\label{eq:0}
\kappa
=\left(
(\epsilon_1,\overset{(v_1)}{\ldots},\epsilon_1,\ldots,\epsilon_s,\overset{(v_s)}{\ldots},\epsilon_s),
(\varepsilon_1,\overset{(w_1)}{\ldots},\varepsilon_1,\ldots,\varepsilon_l,\overset{(w_l)}{\ldots},\varepsilon_l)
\right),
\end{equation}
the repeating eigenvalues according to multiplicity. 
This gives a collection of (disjoint) components 
\begin{equation}\label{eq:1}
\mathcal{X}_n^*=
\bigsqcup_\kappa\mathcal{X}_{n,\kappa}^*,
\end{equation}
each of which is irreducible and hence connected 
(see \cite[Section~5(5)]{MP16-1}). 

\section{$\mathrm{SL}_n(\C)$-Twisted Alexander polynomial of torus knots}\label{section:3}

In this section, we provide a formula for the twisted Alexander polynomial $\D_{K_{p,q}}^\rho(t)$, and prove Theorem \ref{thm:main} and Corollary \ref{cor:torus}. Moreover, we provide an explicit formula for $\D_{K_{p,q}}^\rho(t)$ on components of the maximal dimension of the $\SLn$-character variety  of $K_{p,q}$. 

\subsection{Proof of Theorem \ref{thm:main} and Corollary \ref{cor:torus}}
Let us recall the statement of our first result. 

\begin{theorem}[Theorem \ref{thm:main}]\label{thm:torusTAP}
The coefficients of $\D_{K_{p,q}}^\rho(t)$ associated with irreducible representations $\rho\colon G_{p,q}\to\SLn$ are locally constant on 
$\mathcal{X}_n^*$. Moreover, $\D_{K_{p,q}}^\rho(t)\in\A[t^{\pm1}]$ holds.  
\end{theorem}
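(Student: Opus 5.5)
We compute $\D_{K_{p,q}}^\rho(t)$ directly from the one-relator presentation $G_{p,q}=\la x,y\,|\,x^py^{-q}\ra$ with $r=x^py^{-q}$. The abelianization is $\phi(x)=t^q$, $\phi(y)=t^p$. The free derivative is
$$
\frac{\partial r}{\partial y}=-x^py^{-q}(1+y+\cdots+y^{q-1}),
$$
so that, using Wada's formula with the column of $x$ removed,
$$
\D_{K_{p,q}}^\rho(t)=\pm\frac{\det\bigl(I_n+t^pB+\cdots+(t^pB)^{q-1}\bigr)}{\det(t^qA-I_n)},
$$
with $A=\rho(x)$, $B=\rho(y)$ (the factor $\Phi(r)=I_n$ drops). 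We first check that $\det(t^qA-I_n)\neq 0$ as a polynomial in $t$, so the formula is legitimate; this is clear since its leading coefficient is $\det A=1$.

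Next we use the description in Subsection~\ref{subsec:2.3}. For $\rho$ irreducible, $A$ and $B$ are diagonalizable with eigenvalues $\epsilon_i$, $\varepsilon_j$ satisfying $\epsilon_i^p=\varepsilon_j^q=\omega$, where $\omega^n=1$. Hence every eigenvalue is a root of unity, in particular an algebraic integer. Diagonalizing gives
$$
\det\Bigl(\sum_{k=0}^{q-1}(t^pB)^k\Bigr)=\prod_{j}\Bigl(\frac{1-(t^p\varepsilon_j)^q}{1-t^p\varepsilon_j}\Bigr)^{w_j}
=\prod_j\Bigl(\frac{1-\omega t^{pq}}{1-\varepsilon_jt^p}\Bigr)^{w_j},
$$
and $\det(t^qA-I_n)=\prod_i(\epsilon_it^q-1)^{v_i}$. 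The twisted Alexander polynomial therefore becomes the rational function
$$
\pm\frac{(1-\omega t^{pq})^n}{\prod_j(1-\varepsilon_jt^p)^{w_j}\prod_i(\epsilon_it^q-1)^{v_i}},
$$
which depends only on the eigenvalue data $\kappa$ in \eqref{eq:0}. By \eqref{eq:1}, $\kappa$ is constant on each connected component $\mathcal{X}^*_{n,\kappa}$. This gives local constancy, once we know the expression is a Laurent polynomial (standard for Wada's invariant of a knot with nonabelian irreducible $\rho$; alternatively, it follows from the divisibility argument below).

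The main step is showing that the quotient is a polynomial with coefficients in $\A$. The factors $1-\varepsilon_jt^p$ divide $1-\omega t^{pq}$ and are pairwise coprime for distinct $\varepsilon_j$. Likewise the factors $\epsilon_it^q-1$ divide $\omega t^{pq}-1$, since $(\epsilon_it^q)^p=\omega t^{pq}$. A factor $1-\varepsilon t^p$ and a factor $1-\epsilon t^q$ share a root $\zeta$ only when $\zeta^p=\varepsilon^{-1}$ and $\zeta^q=\epsilon^{-1}$. Because $\gcd(p,q)=1$, such a common root is unique; it is a simple root of $1-\omega t^{pq}$, and that root has multiplicity $n$ in the numerator. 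The obstacle is the multiplicity count: the total multiplicity of $\zeta$ in the denominator is $w_j+v_i$, and we need $w_j+v_i\le n$. We expect this to follow from irreducibility. If $v_i+w_j>n$, then the $\epsilon_i$-eigenspace of $A$ and the $\varepsilon_j$-eigenspace of $B$ intersect nontrivially. A common eigenvector then spans an invariant line, since $A$ and $B$ generate the image of $\rho$. This contradicts irreducibility for $n\ge2$.

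With these divisibility facts, the quotient is a polynomial in $\A[t]$ up to a unit $\pm t^i$. Its coefficients lie in $\A$ because we are dividing a monic-up-to-root-of-unity polynomial with coefficients in $\A$ by a product of factors of the form $(1-\alpha t^m)$ with $\alpha$ a root of unity. Such a product has constant term $1$, hence is invertible in $\A[[t]]$, so the exact quotient lies in $\A[[t]]\cap\C[t]=\A[t]$. This proves $\D_{K_{p,q}}^\rho(t)\in\A[t^{\pm1}]$.
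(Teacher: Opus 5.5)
Your proof is correct and follows essentially the paper's argument. You use the same one-relator Fox-calculus computation and diagonalization, get local constancy from the constancy of the eigenvalue data $\kappa$ on each $\mathcal{X}^*_{n,\kappa}$, and establish polynomiality through the same irreducibility bound $v_i+w_j\le n$ obtained by intersecting eigenspaces. The only differences are cosmetic: you remove the other column ($\partial r/\partial y$ over $\det\Phi(x-1)$), you check divisibility by counting root multiplicities instead of the paper's explicit factorization with $v=\max v_i$, $w=\max w_j$, and you get integrality by inverting the denominator (constant term $\pm1$) in $\A[[t]]$ rather than reading it off the explicit root-of-unity factors.
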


\begin{proof}
Let $\rho\colon G_{p,q}\to \SLn$ be an irreducible representation, and $X=\rho(x)$, $Y=\rho(y)$. We may assume that $X$ is a diagonal matrix and $Y$ is conjugate to a diagonal matrix in $\SLn$. Note that $\phi(x)=t^q$ and $\phi(y)=t^p$ hold for the abelianization $\phi\colon G_{p,q}\to\Z\cong\la t\ra$.  
We set $r=x^py^{-q}$. Then $\frac{\partial r}{\partial x}=1+x + \cdots + x^{p-1}$ and
\begin{eqnarray}\label{eq:2}
\Delta_{K_{p,q}}^\rho(t) &=& \frac{\det \Phi(\frac{\partial r}{\partial x})}{\det \Phi(y-1)} \notag \\
&=& \frac{\det (I + t^q X + \cdots + t^{(p-1)q} X^{p-1})}{\det (t^pY-I)} \notag \\
&=&  \frac{\prod_{i=1}^n (1 + t^q \alpha_i + \cdots + t^{(p-1)q} \alpha_i^{p-1})}{\prod_{j=1}^n (t^p \beta_j -1)} \notag \\
&=&  \frac{\prod_{i=1}^n \frac{ t^{pq} \alpha_i^p - 1}{t^q \alpha_i - 1}}{\prod_{j=1}^n (t^p \beta_j -1)} \notag \\
&=&  \frac{(t^{pq}\omega -1)^n}{\prod_{i=1}^n (t^q \alpha_i - 1) \prod_{j=1}^n (t^p \beta_j -1)},
\end{eqnarray}
where $\omega$ is an $n$-th root of unity (i.e. $\omega^n=1$), 
$\alpha_1, \cdots, \alpha_n$ and  $\beta_1, \cdots, \beta_n$ are (repeating) eigenvalues of $X$ and $Y$ respectively. Note that $\alpha_i^p = \beta_j^q = \omega$ holds.

On each irreducible component of $\mathcal{X}_n^*$, the eigenvalues of $X$ and $Y$ (together with their multiplicities) remain unchanged, as described in (\ref{eq:1}). Thus, the coefficients of the $\mathrm{SL}_n(\C)$-twisted Alexander polynomial of the torus knot $K_{p,q}$ are constant on each irreducible component of $\mathcal{X}_n^*$. 

Next, using the notation of the eigenvalues of $X$ and $Y$ as in (\ref{eq:0}), we have
$$
\Delta_{K_{p,q}}^\rho(t) =  \frac{(t^{pq}\omega -1)^n}{ (t^q \epsilon_1 - 1)^{v_1} \cdots (t^q \epsilon_s - 1)^{v_s}  (t^p \varepsilon_1 - 1)^{w_1} \cdots (t^p \varepsilon_l - 1)^{w_l} }.
$$
This becomes a polynomial in $t$ if $v_i + w_j \le n$ for all $i, j$. Note that $v_1 + \cdots + v_s = w_1 + \cdots + w_l = n$. Since neither $X$ nor $Y$ is a multiple of the identity for any irreducible
representation $\rho$, we have $v_i < n$ and $w_j < n$. 

Assume $v_i + w_j >n$ for some $i, j$. Then the intersection of the eigenspace of $X$ corresponding to $\epsilon_i$ and the eigenspace of $Y$ corresponding to $\varepsilon_j$ gives an
invariant non-trivial subspace of $\rho$ (of dimension $v_i + w_j -n > 0$). This contradicts the irreducibility of $\rho$. 

Let $v = \max\{v_1, \ldots, v_s\}$ and $w = \max\{w_1, \ldots, w_l\}$. Then $v+w \le n$. Hence
\begin{eqnarray*}
&& \Delta_{K_{p,q}}^\rho(t) \\
&=& (t^{pq}\omega -1)^{n-v-w} \frac{(t^{pq}\omega -1)^v}{ (t^q \epsilon_1 - 1)^{v_1} \cdots (t^q \epsilon_s - 1)^{v_s} } \frac{(t^{pq}\omega -1)^w}{(t^p \varepsilon_1 - 1)^{w_1} \cdots (t^p \varepsilon_l - 1)^{w_l} }\\
&=& (t^{pq}\omega -1)^{n-v-w} (t^q \epsilon_1 - 1)^{v-v_1} \cdots (t^q \epsilon_s - 1)^{v-v_s} \left( \frac{t^{pq}\omega -1}{ (t^q \epsilon_1 - 1) \cdots (t^q \epsilon_s - 1)} \right)^v \\
&& \times \ (t^p \varepsilon_1 - 1)^{w-w_1} \cdots (t^p \varepsilon_l - 1)^{w-w_l} \left( \frac{t^{pq}\omega -1}{ (t^p \varepsilon_1 - 1) \cdots (t^p \varepsilon_l - 1)} \right)^w.
\end{eqnarray*}

Write $\omega = e^{2k\pi\sqrt{-1}/n}$, where $0 \le k \le n-1$. Since $\epsilon_i^p=\omega$ we can write $\epsilon_i = e^{2\pi\sqrt{-1}(k/n+c_i)/p}$, where $c_i \in \Z_p=\Z/p\Z$. Since $\epsilon_i$ are distinct, $c_i \in \Z_p$ are distinct. Then
$$
\frac{t^{pq}\omega -1}{ (t^q \epsilon_1 - 1) \cdots (t^q \epsilon_s - 1)} = \prod_{c \in \Z_p \setminus \{c_1, \cdots, c_s\}} (e^{2\pi\sqrt{-1}(k/n+c)/p} t^q-1) \in \C[t].
$$

Similarly, we have
$$
\frac{t^{pq}\omega -1}{ (t^p \varepsilon_1 - 1) \cdots (t^p \varepsilon_l - 1)} = \prod_{d \in \Z_q \setminus \{d_1, \cdots, d_l\}} (e^{2\pi\sqrt{-1}(k/n+d)/p} t^p-1) \in \C[t].
$$
where $d_j \in \Z_q$ are distinct.  

Since roots of unity are algebraic integers, we conclude that $\Delta_{K_{p,q}}^\rho(t) \in \A[t^{\pm 1}]$. This completes the proof of Theorem \ref{thm:torusTAP}. 
\end{proof}

\begin{remark}
When $n=2$ (and hence $\omega=\pm1$), (\ref{eq:2}) gives the same formula as in \cite[Section 4]{KtM12-1}. If we apply a similar calculation as in \cite[Section 4]{KMT20-1} to a $d$-component torus link $L=K_{d p,dq}$ in $S^3$, 
we can show that every coefficient of $\D_{L}^\rho(t_1, \cdots, t_d)$ associated with irreducible $\SLn$-representations is locally constant. 
\end{remark}

\begin{corollary}[Corollary \ref{cor:torus}]\label{cor:torusTorsion}
The Reidemeister torsion $\tau_\rho(E(K_{p,q}))$ associated with irreducible representations $\rho\colon G_{p,q}\to\SLn$ is locally constant on $\mathcal{X}_n^*$. Moreover, $\tau_\rho(E(K_{p,q}))$ is an algebraic integer. 
\end{corollary}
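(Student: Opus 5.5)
The plan is to deduce the corollary directly from Theorem \ref{thm:torusTAP}, using the identification $\tau_\rho(E(K))=\pm\D_K^\rho(1)$ recalled in Subsection \ref{subsec:2.2}. That identification holds in both the acyclic and non-acyclic cases, provided the denominator $\det\Phi(x_k-1)$ is nonzero. So the first step is to make sure this hypothesis holds for the presentation $\la x,y\,|\,x^py^{-q}\ra$ used in the proof of Theorem \ref{thm:torusTAP}.

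In that proof the removed generator is $y$, so the denominator is
\[
\det\Phi(y-1)=\det(t^pY-I)=\prod_{j=1}^n (t^p\beta_j-1).
\]
As a polynomial in $t$ this is nonzero, since its constant term is $(-1)^n$. Hence the quotient defining $\D_{K_{p,q}}^\rho(t)$ makes sense in $\C(t)$, and by Theorem \ref{thm:torusTAP} it is a Laurent polynomial.

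The subtle point is whether the cited identification needs $\det\Phi(y-1)\neq0$ as a polynomial in $t$ or after evaluation at $t=1$. At $t=1$ the denominator is $\det(Y-I)$, which vanishes exactly when $1$ is an eigenvalue of $Y$. I would handle this in two steps.

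First, I would read the cited results of \cite{DFJ12-1} and \cite{Kitano96-1} in the form used in Subsection \ref{subsec:2.2}: the nonvanishing of $\det\Phi(x_k-1)$ in $\C[t^{\pm1}]$ suffices, and then $\tau_\rho(E(K))=\pm\D_K^\rho(1)$, where $\D_K^\rho(1)$ means evaluating the Laurent polynomial.

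Second, as a safeguard, I would use the symmetric presentation and remove $x$ instead of $y$. The resulting Wada quotient equals the same $\D_{K_{p,q}}^\rho(t)$ up to a unit $\pm t^i$. The new denominator is $\det(X-I)$ at $t=1$. Now $X$ and $Y$ cannot both have eigenvalue $1$: then $\omega=1$, and in the formula (\ref{eq:2}) the factors $(t-1)$ would appear in the denominator more times than in the numerator, which would contradict polynomiality as established in the proof. So at least one of the two choices gives a denominator that is nonzero even at $t=1$.

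Granting the identification, set $\tau_\rho(E(K_{p,q}))=\pm\D_{K_{p,q}}^\rho(1)$. This value is the sum of the coefficients of $\D_{K_{p,q}}^\rho(t)$. By Theorem \ref{thm:torusTAP}, each coefficient is locally constant on $\mathcal{X}_n^*$ and lies in $\A$.

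A finite sum of locally constant functions is locally constant. The sign ambiguity and the ambiguity $\pm t^i$ in $\D$ do not affect this, since they change the value at $t=1$ by at most a sign. Thus $|\tau_\rho|$, and $\tau_\rho$ up to sign, is constant on each component $\mathcal{X}^*_{n,\kappa}$.

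Since $\A$ is a ring and is closed under negation, the sum of the coefficients is an algebraic integer, and so is $\tau_\rho(E(K_{p,q}))$. This completes the plan.
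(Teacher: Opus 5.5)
Your main line is the paper's argument. You identify $\tau_\rho(E(K_{p,q}))$ with $\pm\D_{K_{p,q}}^\rho(1)$, read this as the sum of the coefficients of the Laurent polynomial, and apply Theorem~\ref{thm:torusTAP}. Taken in the form the paper uses (only $\det\Phi(y-1)\neq 0$ in $\C[t^{\pm1}]$ is needed), that is complete.

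Your ``safeguard'', however, rests on a false claim: $X$ and $Y$ \emph{can} both have the eigenvalue $1$. Let $v$ and $w$ be the multiplicities of the eigenvalue $1$ of $X$ and $Y$; this forces $\omega=1$. In (\ref{eq:2}) the numerator $(t^{pq}-1)^n$ then contains $(t-1)^n$, while the denominator contains $(t-1)^{v+w}$. So polynomiality only yields $v+w\le n$, which is exactly the inequality already obtained from irreducibility, not a contradiction.

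The paper's own trefoil example with $n=3$ is a counterexample. There $\omega=1$, $\alpha_3=1$, $1\in\{\beta_1,\beta_2,\beta_3\}$, and $\D_{K_{2,3}}^\rho(t)=t^3-1$. Hence $\det(X-I)=\det(Y-I)=0$, and neither choice of deleted generator gives a Wada denominator that is nonzero at $t=1$.

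So drop the safeguard: if your first reading of the citation were insufficient, it would not rescue you. What actually justifies using only $\det\Phi(y-1)\neq 0$ in $\C[t^{\pm1}]$ is specialization of torsion at $t=1$. Work over the discrete valuation ring $R=\C[t]_{(t-1)}$. The twisted chain complex has $H_0=0$, by irreducibility at $t=1$ plus Nakayama. It has $H_2=0$, since $H_2$ is a submodule of a free module that vanishes over $\C(t)$. Hence the order of vanishing of $\D_{K_{p,q}}^\rho$ at $t=1$ equals the length of $H_1$, so $\D_{K_{p,q}}^\rho(1)\neq 0$ exactly when $\rho$ is acyclic. In that case the complex is contractible over $R$ and the torsions agree after setting $t=1$; otherwise both sides vanish, as in the trefoil case.

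This is also how the paper's displayed quotient (\ref{eq:3}) must be read when $\omega=1$, since it is literally $0/0$ whenever some $\alpha_i$ or $\beta_j$ equals $1$. Your formulation via the sum of the coefficients sidesteps that issue.
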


\begin{proof}
It is known that the evaluation of the twisted Alexander polynomial $\D_{K}^\rho(t)$ at $t=1$ coincides with the Reidemeister torsion $\tau_{\rho}(E(K))$ for any representation $\rho\colon G(K) \to \SLn$ (see \cite{Kitano96-1}). Since $\Delta_{K_{p,q}}^\rho(t) \in \A[t^{\pm 1}]$ holds by Theorem \ref{thm:torusTAP}, we have
\begin{equation}\label{eq:3}
\tau_{\rho}(E(K_{p,q}))
=  \D_{K_{p,q}}^\rho(1)
=
\frac{(\omega -1)^n}{\prod_{i=1}^n (\alpha_i - 1) \prod_{j=1}^n (\beta_j -1)}\in\A,
\end{equation}
which is locally constant on $\mathcal{X}_n^*$.
\end{proof}

\begin{remark} 
There are some remarks on Corollary \ref{cor:torusTorsion}. 
\begin{itemize}
\item[(1)] 
Corollary \ref{cor:torusTorsion} complements a result of Kitano (see \cite{Kitano96-2}), which states that $\SLn$-Reidemeister torsions of Seifert fibered spaces over an orbifold of positive genus are locally constant on the space of irreducible $\SLn$-representations (see Proposition \ref{pro:Kitano-1}). 

\item[(2)]  A representation $\rho\colon G_{p,q} \to \SLn$ is acyclic if and only if $\tau_{\rho}(E(K_{p,q})) \not= 0$. By \eqref{eq:3}, this is equivalent to $\omega \not =1$, i.e. $X^p=Y^q \not=  I_n$.

\item[(3)]
One can check that the irreducible $\SLn$-representations of $G_{p,q}$ that factor through $\SL$ always satisfy $\omega=\pm 1$ and $\omega^n=1$, see e.g. \cite[Section 2.3]{Yamaguchi13-1}.  They are acyclic if and only if $\omega \not= 1$ (by Remark 3.4(2)). This  is equivalent to $\omega=-1$ and $n$ is even. 
In this case, the formula (\ref{eq:3}) coincides with the one in \cite[Proposition 4.1]{Yamaguchi13-1}. 

\end{itemize}
\end{remark}

\subsection{$\mathrm{SL}_n(\C)$-Twisted Alexander polynomial on components of maximal dimension}\label{subsection:maximal}


The $\mathrm{SL}_n(\C)$-character variety of a torus knot has dimension at most $(n-1)^2$. If $\min\{p, q\} \ge n$, then there are $\frac{1}{n} \binom{p-1}{n-1} \binom{q-1}{n-1}$ components of maximal dimension $(n-1)^2$. They are  determined by
\begin{itemize}
\item $\alpha_1 \cdots \alpha_n = 1$, and $\alpha_i$ distinct,
\item $\beta_1 \cdots \beta_n = 1$, and $\beta_j$ distinct,
\item $\alpha_i^p=\beta_j^q=\omega$,
\item $\omega^n=1$.
\end{itemize}
Note that there are no components of dimension $(n-1)^2$ if $\min\{p, q\}<n$ (see \cite[Theorem~6.1]{MP16-1} for details).

Write $\omega = e^{2k\pi\sqrt{-1}/n}$, where $0 \le k \le n-1$. Since $\alpha_i^p=\omega$ we can write $\alpha_i = e^{2\pi\sqrt{-1}(k/n+a_i)/p}$, where $a_i \in \Z_p=\Z/p\Z$. Since $\alpha_i$ are distinct, $a_i \in \Z_p$ are distinct. The equation $\alpha_1 \cdots \alpha_n=1$ is  equivalent to $a_1+ \cdots +a_n+k \in p\Z$. 

Similarly, we can write $\beta_j = e^{2\pi\sqrt{-1}(k/n+b_j)/q}$, where $b_j \in \Z_q$ are distinct and $b_1+ \cdots +b_n+k \in q\Z$.  

Then we have the following polynomial description:
\begin{eqnarray*}
\Delta_{K_{p,q}}^\rho(t) 
&=&  \frac{(t^{pq}\omega -1)^n}{\prod_{i=1}^n (t^q \alpha_i - 1) \prod_{j=1}^n (t^p \beta_j -1)} \\
&=&   (t^{pq} \omega-1)^{n-2}   \times \frac{t^{pq}\omega -1}{\prod_{i=1}^n (t^q \alpha_i - 1)} \times \frac{t^{pq}\omega -1}{\prod_{j=1}^n (t^p \beta_j -1)} \\
&=&  (t^{pq} e^{2k\pi\sqrt{-1}/n} -1)^{n-2} \times \prod_{a \in \Z_p \setminus \{a_1, \cdots, a_n\}} (e^{2\pi\sqrt{-1}(k/n+a)/p} t^q-1)\\
&& \times  \prod_{b \in \Z_q \setminus \{b_1, \cdots, b_n\}} (e^{2\pi\sqrt{-1}(k/n+b)/q} t^p-1).
\end{eqnarray*}

\section{ $\mathrm{SL}_3(\C)$-Twisted Alexander polynomial of torus knots}\label{section:5}

In this section, we focus on components of non-maximal dimension of the $\SLt$-character variety, and provide several formulas of twisted Alexander polynomials of torus knots of types $(2,q)$ and $(3,q)$. 

The $\mathrm{SL}_3(\C)$-character variety of a torus knot has dimension at most $(3-1)^2=4$. Components are of dimension $4$ (if $\min\{p,q\} \ge 3$) or $2$ (see \cite[Proposition~8.2]{MP16-1}). There are $\frac{1}{12}(p-1)(p-2)(q-1)(q-2)$ components of dimension $4$ and the twisted Alexander polynomial on these components  is given in the previous section. More explicitly, if $\min\{p,q\} \ge 3$ then on components of dimension $4$ we have
\begin{eqnarray*}
\Delta_{K_{p,q}}^\rho(t) 
&=&  (t^{pq} e^{2k\pi\sqrt{-1}/3} -1) \prod_{a \in \Z_p \setminus \{a_1, a_2, a_3\}} (e^{2\pi\sqrt{-1}(k/3+a)/p} t^q-1)\\
&& \times  \prod_{b \in \Z_q \setminus \{b_1, b_2, b_3\}} (e^{2\pi\sqrt{-1}(k/3+b)/q} t^p-1),
\end{eqnarray*}
where $0 \le k \le 2$, $a_1, a_2, a_3 \in \Z_p$ are distinct and $a_1+ a_2 +a_3+k \in p\Z$, and $b_1, b_2, b_3 \in \Z_q$ are distinct and $b_1+ b_2 +b_3+k \in q\Z$. 

We now consider the components of dimension $2$. There are $\frac{1}{2}(p-1)(q-1)(p+q-4)$ such components (see \cite[Proposition 8.2]{MP16-1} and its proof for details). They are determined by
\begin{itemize}
\item either (1) $\alpha_1 = \alpha_2 \not= \alpha_3$ and $\beta_1,\, \beta_2, \, \beta_3$ distinct or (2) $\alpha_1,\, \alpha_2, \, \alpha_3$ distinct and $\beta_1 = \beta_2 \not= \beta_3$,
\item $\alpha_1 \alpha_2 \alpha_3=\beta_1 \beta_2 \beta_3=1$, 
\item $\alpha_i^p=\beta_j^q=\omega$,
\item $\omega^3=1$.
\end{itemize}

Assume that (1) holds. Then $\alpha_1 = \alpha_2 \not= \alpha_3$ and $\beta_1,\, \beta_2, \, \beta_3$ distinct. Write $\omega = e^{2k\pi\sqrt{-1}/3}$, where $0 \le k \le 2$. Since $\alpha_i^p=\omega$ we can write $\alpha_i = e^{2\pi\sqrt{-1}(k/3+a_i)/p}$, where $a_i \in \Z_p$. Since $\alpha_1 = \alpha_2 \not= \alpha_3$ we have $a_1=a_2 \not= a_3$.
The equation $\alpha_1 \alpha_2 \alpha_3=1$ is  equivalent to $2a_2+a_3+k \in p\Z$.  

Now we consider the $\beta_j$. Since $\beta_j^q=\omega$ we can write $\beta_j = e^{2\pi\sqrt{-1}(k/3+b_j)/q}$, where $b_j \in \Z_q$. Since $\beta_j$ are distinct, $b_j \in \Z_q$ are distinct. The equation $\beta_1 \beta_2 \beta_3=1$ is  equivalent to $b_1+b_2+b_3+k \in q\Z$. 

Then we have
\begin{eqnarray*}
\Delta_{K_{p,q}}^\rho(t) &=&  \frac{(t^{pq}\omega -1)^3}{\prod_{i=1}^3 (t^q \alpha_i - 1) \prod_{j=1}^3 (t^p \beta_j -1)} \\
&=&  (t^q \alpha_3-1) \left( \frac{t^{pq}\omega -1}{\prod_{i=2}^3 (t^q \alpha_i-1)} \right)^2 \frac{t^{pq}\omega -1}{\prod_{j=1}^3 (t^p \beta_j -1)} \\
&=& (e^{2\pi\sqrt{-1}(k/3+a_3)/p} t^q-1)  \left(\prod_{a \in \Z_p \setminus \{a_2,a_3\}} (e^{2\pi\sqrt{-1}(k/3+a)/p} t^q-1) \right)^2  \\
&& \times \, \prod_{b \in \Z_q \setminus \{b_1,b_2, b_3\}} (e^{2\pi\sqrt{-1}(k/3+b)/q} t^p-1).
\end{eqnarray*}

Case (2) can be treated in a similar manner. In this case we have $a_i \in \Z_p$ are distinct and $a_1+a_2+a_3+k \in p\Z$, and $b_1 = b_2 \not= b_3$ in $\Z_q$ and $2b_2+b_3+k \in p\Z$. Then 
\begin{eqnarray*}
\Delta_{K_{p,q}}^\rho(t) 
&=& (e^{2\pi\sqrt{-1}(k/3+b_3)/q} t^p-1)  \left(\prod_{b \in \Z_q \setminus \{b_2,b_3\}} (e^{2\pi\sqrt{-1}(k/3+b)/q} t^p-1) \right)^2  \\
&& \times \, \prod_{a \in \Z_p \setminus \{a_1,a_2, a_3\}} (e^{2\pi\sqrt{-1}(k/3+a)/p} t^q-1).
\end{eqnarray*}

\subsection{The case $p=2$} This is the $(2,q)$-torus knot. There are no components of dimension $4$ (see the first paragraph of Subsection \ref{subsection:maximal}). There are $\frac{1}{2}(q-1)(q-2)$ components of dimension $2$. 

When $p=2$, only case (1) occurs. Since  $2a_2+a_3+k \in 2\Z$, we must have $a_3=k \pmod{2}$. This implies that $e^{2\pi\sqrt{-1}(k/3+a_3)/2} = (-1)^{k}e^{k\pi\sqrt{-1}/3}$. Since $\Z_2 \setminus \{a_2,a_3\} = \emptyset$ we obtain 
\begin{eqnarray*}
\Delta_{K_{2,q}}^\rho(t) 
&=& ((-1)^{k}e^{k\pi\sqrt{-1}/3} t^q-1) \prod_{b \in \Z_q \setminus \{b_1,b_2, b_3\}} (e^{2\pi\sqrt{-1}(k/3+b)/q} t^2-1),
\end{eqnarray*}
where $0 \le k \le 2$, $b_j \in \Z_q$ are distinct and $b_1+b_2+b_3+k \in q\Z$. 

\begin{example}
$(p,q)=(2,3)$. This is the trefoil knot. There is $\frac{1}{2}(q-1)(q-2)=1$ component of irreducible representations. It is determined as follows.

Since $b_j \in \Z_3$ are distinct, we must have $\{b_1, b_2, b_3\}=\{0,1,2\}$. Then the condition $b_1+b_2+b_3+k \in 3\Z$ implies $k=0$. (In this case, $\omega=1$, $\alpha_1 =\alpha_2 =  -1$, $\alpha_3 =1$, and $\{\beta_1,\, \beta_2, \, \beta_3\} = \{1, e^{2\pi\sqrt{-1}/3}, e^{4\pi\sqrt{-1}/3}\}$.) Since $\Z_3 \setminus \{b_1,b_2, b_3\} =\emptyset$ we obtain 
\begin{eqnarray*}
\Delta_{K_{2,3}}^\rho(t) = t^3-1.
\end{eqnarray*}
\end{example}

\begin{example}
$(p,q) = (2,5)$. In this case there are $\frac{1}{2}(q-1)(q-2)=6$ components of irreducible representations. They are determined as follows: 

Since $b_1+b_2+b_3 \le 2+3+4=9$ and $0 \le k \le 2$, we have $b_1+b_2+b_3+k \le 11$. Then $b_1+b_2+b_3+k \in 5\Z$ implies that $b_1+b_2+b_3+k$ is equal to either $5$ or $10$. 

If $b_1+b_2+b_3+k = 5$ then 
\begin{itemize}
\item $k=0$ and $\{b_1, b_2, b_3\}=\{0,1,4\}$ or $\{0, 2, 3\}$.
\item $k=1$ and $\{b_1, b_2, b_3\}=\{0,1,3\}$.
\item $k=2$ and $\{b_1, b_2, b_3\}=\{0,1,2\}$.
\end{itemize}

If $b_1+b_2+b_3+k = 10$ then 
\begin{itemize}
\item $k=1$ and $\{b_1, b_2, b_3\}=\{2,3,4\}$.
\item $k=2$ and $\{b_1, b_2, b_3\}=\{1,3,4\}$.
\end{itemize}

Since \begin{eqnarray*}
\Delta_{K_{2,5}}^\rho(t) 
&=& ((-1)^{k}e^{k\pi\sqrt{-1}/3} t^5-1) \prod_{b \in \Z_5 \setminus \{b_1,b_2, b_3\}} (e^{2\pi\sqrt{-1}(k/3+b)/5} t^2-1),
\end{eqnarray*}
the twisted Alexander polynomial of these six components are listed below:

\begin{center}
        \begin{tabular}{ |c | c | c | }
        \hline
            $k$ & $\{b_1, b_2, b_3\}$ & $\Delta_{K_{2,5}}^\rho(t)$  \\ \hline 
            $0$ & $\{0,1,4\}$ & $(t^5-1) (e^{4\pi\sqrt{-1}/5} t^2-1) (e^{6\pi\sqrt{-1}/5} t^2-1)$  \\ \hline
             & $\{0,2,3\}$ & $(t^5-1) (e^{2\pi\sqrt{-1}/5} t^2-1) (e^{8\pi\sqrt{-1}/5} t^2-1)$  \\
        \hline
        $1$ & $\{0,1,3\}$ & $-(e^{\pi \sqrt{-1}/3}t^5+1) (e^{14\pi\sqrt{-1}/15} t^2-1) (e^{26\pi\sqrt{-1}/15} t^2-1)$  \\ \hline
          & $\{2,3,4\}$ & $-(e^{\pi \sqrt{-1}/3}t^5+1) (e^{2\pi\sqrt{-1}/15} t^2-1) (e^{8\pi\sqrt{-1}/15} t^2-1)$  \\ \hline
        $2$ & $\{0,1,2\}$ & $(e^{2\pi \sqrt{-1}/3}t^5-1) (e^{22\pi\sqrt{-1}/15} t^2-1) (e^{28\pi\sqrt{-1}/15} t^2-1)$  \\ \hline  
         & $\{1,3,4\}$ & $(e^{2\pi \sqrt{-1}/3}t^5-1) (e^{4\pi\sqrt{-1}/15} t^2-1) (e^{16\pi\sqrt{-1}/15} t^2-1)$  \\ \hline
        \end{tabular}
    \end{center}
\end{example}

\subsection{The case $p=3$} There are $\frac{1}{6}(q-1)(q-2)$ components of dimension $4$. On these components, since $a_1, a_2, a_3 \in \Z_3$ are distinct we have $\{a_1, a_2, a_3\} = \{0,1,2\}$. Then $a_1+ a_2 +a_3+k \in 3\Z$ implies that $k=0$. Hence 
\begin{eqnarray*}
\Delta_{K_{3,q}}^\rho(t) 
&=&  (t^{3q}  -1)  \prod_{b \in \Z_q \setminus \{b_1, b_2, b_3\}} (e^{2\pi\sqrt{-1}b/q} t^3-1),
\end{eqnarray*}
where $b_1, b_2, b_3 \in \Z_q$ are distinct and $b_1+ b_2 +b_3 \in q\Z$. 

There are $(q-1)^2$  components of dimension $2$. On these components, either (1) $\alpha_1 = \alpha_2 \not= \alpha_3 \in \Z_3$ and $\beta_1,\, \beta_2, \, \beta_3 \in \Z_q$ distinct, or (2) $\alpha_1,\, \alpha_2, \, \alpha_3 \in \Z_3$ are distinct and $\beta_1 = \beta_2 \not= \beta_3 \in \Z_q$.

Case (1): The condition $2a_2+a_3+k \in 3\Z$ is equivalent to $a_2 = a_3 +k \pmod{3}$. Then $a_2 \not= a_3$ becomes $k \not= 0$. Hence we have the following possibilities: 
\begin{itemize}
\item $k=1$ and $(a_2, a_3)=(1,0)$ or $(2,1)$ or $(0,2)$.
\item $k=2$ and $(a_2, a_3)=(2,0)$ or $(0,1)$ or $(1,2)$.
\end{itemize}

Let 
$$
P = \prod_{b \in \Z_q \setminus \{b_1,b_2, b_3\}} (e^{2\pi\sqrt{-1}(k/3+b)/q} t^3-1),
$$ 
where $b_j \in \Z_q$ are distinct and $b_1+b_2+b_3+k \in q\Z$. Since 
$$
\Delta_{K_{3,q}}^\rho(t) 
= (e^{2\pi\sqrt{-1}(k/3+a_3)/3} t^q-1)  \left(\prod_{a \in \Z_3 \setminus \{a_2,a_3\}} (e^{2\pi\sqrt{-1}(k/3+a)/3} t^q-1) \right)^2  P,
$$
the corresponding twisted Alexander polynomial are listed as follows:
\begin{center}
        \begin{tabular}{ |c | c | c | }
        \hline
            $k$ & $(a_2, a_3)$ & $\Delta_{K_{3,q}}^\rho(t)$  \\ \hline 
            $1$ & $(1,0)$ & $(e^{2\pi\sqrt{-1}/9} t^q-1) (e^{14\pi\sqrt{-1}/9} t^q-1)^2  P$ \\ \hline
             & $(2,1)$ & $(e^{8\pi\sqrt{-1}/9} t^q-1)(e^{2\pi\sqrt{-1}/9} t^q-1)^2 P$\\
        \hline
        & $(0,2)$ & $(e^{14\pi\sqrt{-1}/9} t^q-1)(e^{8\pi\sqrt{-1}/9} t^q-1)^2 P$ \\
        \hline
        $2$ & $(2,0)$ & $(e^{4\pi\sqrt{-1}/9} t^q-1) (e^{10\pi\sqrt{-1}/9} t^q-1)^2  P$ \\ \hline
          & $(0,1)$ & $(e^{10\pi\sqrt{-1}/9} t^q-1) (e^{16\pi\sqrt{-1}/9} t^q-1)^2  P$ \\ \hline
          & $(1,2)$ & $(e^{16\pi\sqrt{-1}/9} t^q-1) (e^{4\pi\sqrt{-1}/9} t^q-1)^2  P$ \\ \hline
        \end{tabular}
    \end{center}

Case (2): Since $a_1, a_2, a_3 \in \Z_3$ are distinct, we have $\{a_1, a_2, a_3\}= \{0,1,2\}$. Then $a_1+a_2+a_3+k \in 3\Z$ implies that $k=0$. 

We now consider $b_j$. We have $b_1 = b_2 \not= b_3$ in $\Z_q$. Since $k=0$, the condition $2b_2+b_3+k \in q\Z$ becomes $2b_2+b_3 \in q\Z$. 

Since 
\begin{eqnarray*}
\Delta_{K_{3,q}}^\rho(t) 
&=& (e^{2\pi\sqrt{-1}(k/3+b_3)/q} t^3-1)  \left(\prod_{b \in \Z_q \setminus \{b_2,b_3\}} (e^{2\pi\sqrt{-1}(k/3+b)/q} t^3-1) \right)^2  \\
&& \times \, \prod_{a \in \Z_3 \setminus \{a_1,a_2, a_3\}} (e^{2\pi\sqrt{-1}(k/3+a)/p} t^q-1),
\end{eqnarray*}
we have 
\begin{eqnarray*}
\Delta_{K_{3,q}}^\rho(t) 
&=& (e^{2\pi\sqrt{-1}b_3/q} t^3-1) \left(\prod_{b \in \Z_q \setminus \{b_2,b_3\}} (e^{2\pi\sqrt{-1}b/q} t^3-1) \right)^2.
\end{eqnarray*}

\begin{example}
$(p,q) = (3,4)$. There is $\frac{1}{6}(q-1)(q-2)=1$ component of dimension $4$. It is determined as follows. Since $b_1 + b_2 +  b_3 \le 1+2+3=6$ and $b_1+ b_2 +b_3 \in 4\Z$, we have $b_1+ b_2 +b_3=4$. This implies that $\{b_1, b_2, b_3\}=\{0, 1, 3\}$.  Hence
\begin{eqnarray*}
\Delta_{K_{3,4}}^\rho(t) 
&=&  (t^{3q}  -1)  \prod_{b \in \Z_q \setminus \{b_1, b_2, b_3\}} (e^{2\pi\sqrt{-1}b/q} t^3-1) \\
&=& -(t^{12}  -1)  ( t^3+1).
\end{eqnarray*}

There are $(q-1)^2=9$  components of dimension $2$. For case (1) we have $k \in \{1,2\}$, $b_j \in \Z_4$ are distinct and $b_1+b_2+b_3+k \in 4\Z$. Since $b_1+b_2+b_3+k \le 1 + 2 + 3 +2 =8$, we have $b_1+b_2+b_3+k=4$ or $8$. 

If $b_1+b_2+b_3+k=4$ then $\{b_1, b_2, b_3\}=\{0, 1, 2\}$ and $k=1$. Then 
$$
P = \prod_{b \in \Z_4 \setminus \{b_1,b_2, b_3\}} (e^{2\pi\sqrt{-1}(k/3+b)/4} t^3-1) = e^{5\pi\sqrt{-1}/3} t^3-1. 
$$

If $b_1+b_2+b_3+k=8$ then $\{b_1, b_2, b_3\}=\{1,2,3\}$ and $k=2$. Then 
$$
P = \prod_{b \in \Z_4 \setminus \{b_1,b_2, b_3\}} (e^{2\pi\sqrt{-1}(k/3+b)/4} t^3-1) = e^{\pi\sqrt{-1}/3} t^3-1. 
$$

For the six components of dimension $2$ in case (1) we have
\begin{center}
        \begin{tabular}{ |c | c | c | }
        \hline
            $k$ & $(a_2, a_3)$ & $\Delta_{K_{3,4}}^\rho(t)$  \\ \hline 
            $1$ & $(1,0)$ & $(e^{2\pi\sqrt{-1}/9} t^4-1) (e^{14\pi\sqrt{-1}/9} t^4-1)^2  (e^{5\pi\sqrt{-1}/3} t^3-1) $ \\ \hline
             & $(2,1)$ & $(e^{8\pi\sqrt{-1}/9} t^4-1)(e^{2\pi\sqrt{-1}/9} t^4-1)^2 (e^{5\pi\sqrt{-1}/3} t^3-1)$\\
        \hline
        & $(0,2)$ & $(e^{14\pi\sqrt{-1}/9} t^4-1)(e^{8\pi\sqrt{-1}/9} t^4-1)^2 (e^{5\pi\sqrt{-1}/3} t^3-1)$ \\
        \hline
        $2$ & $(2,0)$ & $(e^{4\pi\sqrt{-1}/9} t^4-1) (e^{10\pi\sqrt{-1}/9} t^4-1)^2  (e^{\pi\sqrt{-1}/3} t^3-1)$ \\ \hline
          & $(0,1)$ & $(e^{10\pi\sqrt{-1}/9} t^4-1) (e^{16\pi\sqrt{-1}/9} t^4-1)^2  (e^{\pi\sqrt{-1}/3} t^3-1)$ \\ \hline
          & $(1,2)$ & $(e^{16\pi\sqrt{-1}/9} t^4-1) (e^{4\pi\sqrt{-1}/9} t^4-1)^2  (e^{\pi\sqrt{-1}/3} t^3-1)$ \\ \hline
        \end{tabular}
    \end{center}

For case (2) we have $b_2 \not= b_3$ in $\Z_4$ and $2b_2+b_3 \in 4\Z$. Since $2b_2+b_3 \le 2(3)+2 =8$, we have $2b_2+b_3=4$ or $8$. 

If $2b_2+b_3=4$ then $(b_2, b_3)=(1, 2)$ or $(2,0)$. 

If $2b_2+b_3=8$ then $(b_2, b_3)=(3, 2)$. 

Since 
\begin{eqnarray*}
\Delta_{K_{3,4}}^\rho(t) 
&=& (e^{2\pi\sqrt{-1}b_3/4} t^3-1) \left(\prod_{b \in \Z_4 \setminus \{b_2,b_3\}} (e^{2\pi\sqrt{-1}b/4} t^3-1) \right)^2
\end{eqnarray*} 
we have 
\begin{center}
\begin{tabular}{ | c | c | }
\hline
            $(b_2, b_3)$ & $\Delta_{K_{3,4}}^\rho(t)$  \\ \hline 
            $(1,2)$ & $-(t^3+1)(t^3-1)^2 (\sqrt{-1}\, t^3+1)^2$ \\ \hline
            $(2,0)$ & $(t^3-1)(\sqrt{-1} \, t^3-1)^2 (\sqrt{-1} \, t^3+1)^2 = (t^3-1)(t^6+1)^2$\\
        \hline
            $(3,2)$ & $-(t^3+1)(t^3-1)^2 (\sqrt{-1} \, t^3-1)^2$ \\ \hline
        \end{tabular}
    \end{center}
\end{example}

\section{Seifert fibered spaces with $g \ge 1$}\label{section:torsion}

In this section, we provide a proof of Proposition \ref{pro:g>0} using the result of Kitano (see \cite{Kitano96-2}). We first recall the notation of \textit{Seifert index} according to \cite{Orlik72-1}. 

Let $N$ denote the orientable Seifert fibered space given by the following Seifert index $\{v,(o,g);(\alpha_1,\beta_1),\ldots,(\alpha_m,\beta_m)\}$. We assume that the genus $g$ of the base orbifold is positive. Using the Seifert index, the fundamental group $\pi_1(N)$ is given by the following presentation: 
\begin{align*}
\pi_1(N)
=
\la
a_1,b_1,\ldots,a_g,b_g,q_1,\ldots,q_m,h\,|\,&
[a_i,h]=[b_i,h]=[q_i,h]=1,q_j^{\alpha_j}h^{\beta_j}=1,\\
&q_1\cdots q_m[a_1,b_1]\cdots[a_g,b_g]=h^v
\ra.
\end{align*}

Let $\rho\colon \pi_1(N)\to \SLn$ be an irreducible representation. By the irreducibility of $\rho$, there exists $n$-th root  of unity $\omega$ such that $\rho(h)=\omega I_n$. 
Let $e_{j,1},\ldots,e_{j,n}$ denote the eigenvalues of $\rho(q_j)$. Then, Kitano derived the following formula. Here note that we take the inverse of Reidemeister torsion appeared in \cite{Kitano96-2}. 

\begin{proposition}[{\cite[Main Theorem,~Corollary A]{Kitano96-2}}]\label{pro:Kitano-1}
The Reidemeister torsion $\tau_\rho(N)$ is given by
$$
\tau_\rho(N)
=
\frac{(\omega-1)^{n(m+2g-2)}}
{\prod_{j=1}^{m}\left(\omega^{\nu_j}e_{j,1}^{\mu_j}-1\right)\cdots
\left(\omega^{\nu_j}e_{j,n}^{\mu_j}-1\right)}
$$
where $\mu_j,\nu_j\in\Z$ such that $\alpha_j\nu_j-\beta_j\mu_j=-1$ and $0<\mu_j<\alpha_j$. Moreover, $\tau_\rho(N)$ is a constant function on each connected component of the space of irreducible $\SLn$-representations. 
\end{proposition}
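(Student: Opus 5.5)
The plan is to compute $\tau_\rho(N)$ by cutting $N$ along tori into a product piece and solid tori, and then using the multiplicativity of Reidemeister torsion (the Mayer--Vietoris gluing formula). Removing fibered neighbourhoods $V_1,\ldots,V_m$ of the $m$ exceptional fibers leaves $N_0=F\times S^1$, where $F$ is a surface of genus $g$ with $m$ boundary circles. Since $g\geq 1$, $F$ has $\chi(F)=2-2g-m$, and in most cases $F\times S^1$ carries the trivial circle-bundle structure; if $v\neq 0$ I would also drill out one regular fiber to absorb the Euler number and treat it as an extra solid torus with $(\alpha,\beta)=(1,v)$, which contributes a trivial factor. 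The boundary tori $T_j=\partial V_j$ are all glued along tori, so the Mayer--Vietoris sequence for the twisted chain complexes gives
$$\tau_\rho(N)\prod_j\tau_\rho(T_j)=\tau_\rho(N_0)\prod_j\tau_\rho(V_j),$$
provided every piece is acyclic.

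First I would check acyclicity. By irreducibility and Schur's lemma, $\rho(h)=\omega I_n$. Suppose $\omega\neq 1$. The fiber circle then acts without fixed vectors, so each torus $T_j$, the product $N_0$, and every piece containing a fiber direction is acyclic. For a torus $T$ I expect $\tau_\rho(T)=1$, from the standard cell structure of $T^2$ (or from $\tau(S^1\times S^1)=\tau(S^1)^{\chi(S^1)}=1$).

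For $N_0=F\times S^1$, I would use the product formula $\tau_\rho(F\times S^1)=\tau_\rho(S^1)^{\chi(F)}$. It is proved by taking the product cell structure and noting that the twisted complex splits into copies of the circle complex $\C^n\xrightarrow{\rho(h)-I}\C^n$, one copy for each cell of $F$ with the sign of its dimension. In the paper's (inverted) convention this gives $(\omega-1)^{n(m+2g-2)}$.

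For each $V_j$, the solid torus retracts to its core circle, so $\tau_\rho(V_j)=\det(\rho(c_j)-I)^{\mp1}$, with the exponent sign fixed by the convention. The core $c_j$ is homotopic in $V_j$ to $q_j^{\mu_j}h^{\nu_j}$, up to orientation: the meridian of $V_j$ is $q_j^{\alpha_j}h^{\beta_j}$, and the condition $\alpha_j\nu_j-\beta_j\mu_j=-1$ says exactly that $(\mu_j,\nu_j)$ completes it to a basis of $\pi_1(T_j)$. Since $q_j$ commutes with $h$, the eigenvalues of $\rho(c_j)$ are $\omega^{\nu_j}e_{j,i}^{\mu_j}$, and assembling the pieces yields the displayed formula. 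The case $\omega=1$ is handled by the convention $\tau_\rho=0$. The point to verify is that then $\rho$ is non-acyclic: the numerator vanishes, and a direct Euler-characteristic or homology argument shows that $H_*(N;\C^n_\rho)\neq0$.

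The main obstacle I expect is bookkeeping rather than ideas: the orientation and sign conventions linking the Seifert invariants $(\alpha_j,\beta_j)$ to the choice of core curve (hence which of $\pm(\mu_j,\nu_j)$ appears, and whether $0<\mu_j<\alpha_j$ can be arranged), and the handling of the obstruction term $v$. I would settle these by checking against a known case, such as a lens space or the torus-knot formula \eqref{eq:3}.

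Finally, for local constancy, the relation $q_j^{\alpha_j}h^{\beta_j}=1$ gives $\rho(q_j)^{\alpha_j}=\omega^{-\beta_j}I_n$. Hence every $e_{j,i}$ is a root of unity, as is $\omega$. These values form a discrete set, so they are constant on each connected component of the space of irreducible representations, and therefore so is $\tau_\rho(N)$.
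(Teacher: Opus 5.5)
Your computation for the acyclic case $\omega\neq1$ is correct. It is the standard gluing argument; the paper gives no proof and simply quotes Kitano. The details check out:
\begin{itemize}
\item Each torus contributes $1$.
\item Since $\rho(h)=\omega I_n$ is central, $C_*(F\times S^1;\C^n_\rho)\cong C_*(F;\C^n_\rho)\otimes(\C\xrightarrow{\omega-1}\C)$. This is not literally a splitting into circle complexes, but the skeletal filtration of $F$ still gives $(\omega-1)^{n(m+2g-2)}$ in the paper's convention.
\item $\rho(q_j^{\mu_j}h^{\nu_j})^{\alpha_j}=\omega^{\alpha_j\nu_j-\beta_j\mu_j}I_n=\omega^{-1}I_n$. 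This makes each $V_j$ acyclic and identifies its factor.
\item The normalization $0<\mu_j<\alpha_j$ is harmless. Changing $(\mu_j,\nu_j)$ by $(\alpha_j,\beta_j)$ does not change $\rho(q_j^{\mu_j}h^{\nu_j})$, because $\rho(q_j^{\alpha_j}h^{\beta_j})=I_n$.
\end{itemize}
The one real gap is the case $\omega=1$, which you defer to ``a direct Euler-characteristic or homology argument''.

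That step has real content, for three reasons.
\begin{itemize}
\item For $\omega=1$ the displayed formula is not automatically $0$. If some $\rho(q_j)$ has eigenvalue $1$, then $\omega^{\nu_j}e_{j,k}^{\mu_j}=1$ and the formula reads $0/0$. So $\tau_\rho(N)=0$ has to come from an independent proof that $H_*(N;\C^n_\rho)\neq0$.
\item Your local-constancy argument depends on it. You need ``acyclic $\iff\omega\neq1$''; otherwise $\tau_\rho$ could jump between $0$ and a nonzero value inside a component with $\omega=1$.
\item An Euler characteristic count on $N$ itself says nothing, since $\chi(N)=0$. This step is exactly where the hypothesis $g\geq1$ is needed, and your plan never actually uses it ($\chi(F)=2-2g-m$ holds for every $g$).
\end{itemize}

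Here is one way to close the gap. If $\omega=1$, then $\rho$ factors through $\Gamma=\pi_1(N)/\langle h\rangle$, and inflation embeds $H^1(\Gamma;\C^n_\rho)$ into $H^1(N;\C^n_\rho)$. A crossed homomorphism on $\Gamma$ is determined as follows:
\begin{itemize}
\item its values on the $a_i,b_i$ are arbitrary;
\item its value on each $q_j$ lies in $\ker(I+\rho(q_j)+\cdots+\rho(q_j)^{\alpha_j-1})=\mathrm{Im}(\rho(q_j)-I)$, since now $\rho(q_j)^{\alpha_j}=I$ and $\rho(q_j)$ is diagonalizable;
\item these values satisfy $n$ linear conditions coming from $q_1\cdots q_m[a_1,b_1]\cdots[a_g,b_g]=1$.
\end{itemize}
The coboundaries form an $n$-dimensional space by irreducibility. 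Hence
\[
\dim H^1(\Gamma;\C^n_\rho)\ \geq\ n(2g+m-2)-\sum_{j=1}^m d_j,\qquad d_j=\dim\ker(\rho(q_j)-I).
\]
For $g\geq2$ this bound is positive. For $g=1$ the bound equals $\sum_j(n-d_j)$, which vanishes only if every $\rho(q_j)=I$; then $\rho$ factors through $\Z^2$ and is reducible. Thus $H^1(N;\C^n_\rho)\neq0$, and by Poincar\'e duality $\rho$ is not acyclic. For $g=0$ the same bound can vanish once $n\geq3$ (e.g.\ $m=3$ and all $d_j=1$), so the argument genuinely needs $g\geq1$.
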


Let us recall the statement of Proposition \ref{pro:g>0}. 

\begin{proposition}[Proposition \ref{pro:g>0}]\label{pro:main-2}
Let $N$ be a Seifert fibered space with $g\geq1$. 
For any irreducible representation $\rho\colon \pi_1(N)\to\SLn$, the Reidemeister torsion $\tau_\rho(N)$ is an algebraic integer. 
\end{proposition}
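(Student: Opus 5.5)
The plan is to feed Kitano's explicit formula (Proposition \ref{pro:Kitano-1}) into a divisibility argument in the ring $\A$. The key observation is that each factor in the denominator divides $\omega-1$ in $\A$. The hypothesis $g\geq 1$ then guarantees that the numerator contains at least as many copies of $\omega-1$ as there are denominator factors.

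\emph{Degenerate case.} If $\omega=1$, the representation is non-acyclic, or the formula degenerates, so that $\tau_\rho(N)=0\in\A$. I will check that this is consistent with the convention $\tau_\rho(N)=0$ for non-acyclic $\rho$. The point is that if $\omega=1$, then some factor in the denominator vanishes, by the computation below. So from now on assume $\omega\neq 1$.

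\emph{Identify the denominator factors.} Fix $j$ and set $\zeta_{j,k}=\omega^{\nu_j}e_{j,k}^{\mu_j}$. Since $\rho(h)=\omega I_n$ and $q_j^{\alpha_j}h^{\beta_j}=1$, we have $\rho(q_j)^{\alpha_j}=\omega^{-\beta_j}I_n$, hence $e_{j,k}^{\alpha_j}=\omega^{-\beta_j}$. Using $\alpha_j\nu_j-\beta_j\mu_j=-1$ we get
$$
\zeta_{j,k}^{\alpha_j}=\omega^{\alpha_j\nu_j}\,\omega^{-\beta_j\mu_j}=\omega^{-1}.
$$
Thus every $\zeta_{j,k}$ is a root of unity that is an $\alpha_j$-th root of $\omega^{-1}$. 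In particular $\zeta_{j,k}\neq 1$ because $\omega\neq1$.

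\emph{Divisibility.} Let $c$ run over the $\alpha_j$-th roots of unity. Factoring $x^{\alpha_j}-\omega^{-1}=\prod_{c}(x-\zeta_{j,k}c)$ and evaluating at $x=1$ gives
$$
1-\omega^{-1}=\prod_{c}(1-\zeta_{j,k}c).
$$
The factor with $c=1$ is $1-\zeta_{j,k}$. Hence $\zeta_{j,k}-1$ divides $1-\omega^{-1}=\omega^{-1}(\omega-1)$ in $\A$. Since $\omega^{-1}$ is a unit of $\A$, the quotient
$$
u_{j,k}=\frac{\omega-1}{\zeta_{j,k}-1}
$$
is an algebraic integer. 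Note that this divides one factor at a time, so repeated eigenvalues $e_{j,k}$ cause no trouble. That was the step I expected to be the main obstacle, and treating each factor separately dissolves it.

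\emph{Conclusion.} Since $g\geq 1$, we have $n(m+2g-2)=nm+n(2g-2)$ with $n(2g-2)\geq 0$. Therefore Kitano's formula rewrites as
$$
\tau_\rho(N)=(\omega-1)^{n(2g-2)}\prod_{j=1}^m\prod_{k=1}^n u_{j,k}.
$$
Every factor on the right lies in $\A$, so $\tau_\rho(N)\in\A$.
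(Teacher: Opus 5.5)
Your proposal is correct and is essentially the paper's proof: split off $(\omega-1)^{n(2g-2)}$ (a nonnegative exponent because $g\geq 1$), use $q_j^{\alpha_j}h^{\beta_j}=1$ and $\alpha_j\nu_j-\beta_j\mu_j=-1$ to get $\zeta_{j,k}^{\alpha_j}=\omega^{-1}$, and conclude factor by factor that $(\omega^{-1}-1)/(\zeta_{j,k}-1)\in\A$. The paper reads this off as the geometric sum $1+\zeta_{j,k}+\cdots+\zeta_{j,k}^{\alpha_j-1}$ rather than your $\prod_{c}(1-\zeta_{j,k}c)$, which amounts to the same thing.

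One small correction to your aside on $\omega=1$: $\zeta_{j,k}^{\alpha_j}=1$ does not force $\zeta_{j,k}=1$, so no denominator factor need vanish. The case is still harmless, since such $\rho$ is non-acyclic and $\tau_\rho(N)=0$ by convention; the paper does not single it out.
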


\begin{proof}
By Proposition \ref{pro:Kitano-1}, we have
\begin{eqnarray*}
\tau_{\rho}(N) &=& \frac{(\omega-1)^{n(m+2g-2)}} {\prod_{j=1}^m (\omega^{\nu_j} e_{j,1}^{\mu_j} -1) \cdots (\omega^{\nu_j} e_{j,n}^{\mu_j} -1)} \\
&=& (\omega-1)^{n(2g-2)} \prod_{j=1}^m \frac{(\omega-1)^{n} }{ (\omega^{\nu_j} e_{j,1}^{\mu_j} -1) \cdots (\omega^{\nu_j} e_{j,n}^{\mu_j} -1)}. 
\end{eqnarray*}
Since $e_{j,k}^{\alpha_j} \omega^{\beta_j}=1$ and $\alpha_j \nu_j - \beta_j \mu_j =-1$, we have $\omega^{-1} = \omega^{\alpha_j \nu_j} \omega^{- \beta_j \mu_j} =\omega^{\alpha_j \nu_j} e_{j,k}^{\alpha_j \mu_j} = (\omega^{\nu_j} e_{j,k}^{\mu_j} )^{\alpha_j }$. This implies that 
$$
\frac{\omega^{-1}-1}{ \omega^{\nu_j} e_{j,k}^{\mu_j} -1 } = \frac{(\omega^{\nu_j} e_{j,k}^{\mu_j} )^{\alpha_j } - 1}{ \omega^{\nu_j} e_{j,k}^{\mu_j} -1 } \in \BA.
$$
Hence $\tau_{\rho}(N) \in \BA$ holds. This completes the proof of Proposition \ref{pro:main-2}. 
\end{proof}

\section{Power sum of Reidemeister torsions}\label{section:6}

In this section, we discuss the power sums of Reidemeister torsions of torus knots for irreducible representations $\rho\colon G_{p,q}\to\SL$ and their adjoint representations $\mathrm{ad} \circ \rho \colon G_{p,q}\to\SLt$. 

Recall that $G_{p,q}=\la x,y\,|\, x^p=y^q\ra$. Let $\mu=x^{-r}y^{s}\in G_{p,q}$ be a meridian of $K_{p,q}$, where $(r,s)$ is any pair of integers satisfying $ps-qr=1$. 

When $n=2$, the $\SL$-character variety $\mathcal{X}^*_2$ consists of $\frac{(p-1)(q-1)}{2}$ components, which are indexed by the following pairs of integers $(a, b)$ and denoted by $\mathcal{X}^*_{2, (a,b)}$:
\begin{itemize}
\item $0<a < p$, $0< b < q$.
\item $a \equiv b \pmod{2}$.
\item For any $[\rho] \in \mathcal{X}^*_{2, (a,b)}$ we have $\tr \rho(x) =2 \cos \frac{\pi a}{p}$ and $\tr \rho(y) = 2 \cos \frac{\pi b}{q}$. Moreover, the irreducible $\SL$-representation $\rho$ sends $x^p=y^q$ to $(-I)^a$. 
\item $\tr \rho(\mu) \not= 2 \cos \left( \frac{\pi r a}{p} \pm \frac{\pi sb}{q} \right)$.
\end{itemize}
Note that each component $\mathcal{X}^*_{2, (a,b)}$ of $\mathcal{X}^*_2$ is a complex line $\C$ with local parameter $\tr \rho(\mu)$. See \cite{Joh}. 

\subsection{$\mathrm{SL}_2(\C)$-Reidemeister torsions}
We first prepare  the following lemma.

\begin{lemma} \label{id}
For $p \ge 2$ and $m \ge 0$ we have
$$
\sum_{\substack{0< a < 2p \\ a \emph{ odd}}}(4\sin^2 \frac{a \pi}{2p})^m = p  \sum_{l=-\lfloor \frac{m}{p} \rfloor}^{\lfloor \frac{m}{p} \rfloor} (-1)^{(p+1)l}\binom{2m}{m + pl}.
$$
\end{lemma}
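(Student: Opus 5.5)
We will use the identity $4\sin^2\theta = 2-2\cos 2\theta = -(z-1)^2/z$ with $z=e^{2\sqrt{-1}\theta}$. A binomial expansion then reduces the left-hand side to a character sum over odd multiples of $\pi/p$.

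\emph{Step 1: rewrite each term.} For odd $a$ with $0<a<2p$, set $\zeta_a = e^{\pi\sqrt{-1}a/p}$. Then $4\sin^2\frac{a\pi}{2p} = 2-\zeta_a-\zeta_a^{-1} = -\zeta_a^{-1}(1-\zeta_a)^2$. Expanding gives
$$
\Bigl(4\sin^2\frac{a\pi}{2p}\Bigr)^m=(-1)^m\zeta_a^{-m}(1-\zeta_a)^{2m}=\sum_{j=0}^{2m}(-1)^{m+j}\binom{2m}{j}\zeta_a^{\,j-m}.
$$
With $k=j-m$ this becomes
$$
\Bigl(4\sin^2\frac{a\pi}{2p}\Bigr)^m=\sum_{k=-m}^{m}(-1)^k\binom{2m}{m+k}\zeta_a^{\,k}.
$$

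\emph{Step 2: sum over $a$.} The numbers $\zeta_a$ for odd $a$ in $(0,2p)$ are exactly the $p$ roots of $z^p=-1$. Write $\zeta_a=\eta\,\xi^{(a-1)/2}$ with $\eta=e^{\pi\sqrt{-1}/p}$ and $\xi=\eta^2$. Then
$$
\sum_a \zeta_a^k=\eta^k\sum_{c=0}^{p-1}\xi^{ck}.
$$
The inner sum equals $p$ if $p\mid k$ and $0$ otherwise. For $k=pl$ we get $\eta^{pl}=(-1)^l$. Only the terms $k=pl$ with $|pl|\le m$ survive, that is, $|l|\le\lfloor m/p\rfloor$.

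\emph{Step 3: assemble.} The surviving terms give
$$
p\sum_{|l|\le\lfloor m/p\rfloor}(-1)^{pl}(-1)^{l}\binom{2m}{m+pl}=p\sum_{|l|\le\lfloor m/p\rfloor}(-1)^{(p+1)l}\binom{2m}{m+pl},
$$
which is the right-hand side.

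There is no real obstacle here. The only point requiring care is the sign bookkeeping, namely correctly combining the $(-1)^k$ from the binomial expansion with the factor $\eta^{pl}=(-1)^l$ to obtain the exponent $(p+1)l$. The case $m=0$ is also consistent: both sides equal $p$.
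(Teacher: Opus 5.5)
Your proof is correct and is essentially the paper's argument: expand $(2\sin\frac{a\pi}{2p})^{2m}$ binomially in exponentials, then use that the sum over odd $a$ of $e^{\sqrt{-1}ka\pi/p}$ vanishes unless $p\mid k$, in which case it equals $(-1)^{k/p}p$. The only difference is cosmetic: you center the binomial index at $k=j-m$ and factor $\zeta_a=\eta\,\xi^{(a-1)/2}$, whereas the paper sums the geometric series directly in the uncentered index.
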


\begin{proof}
Let $I = \sum_{\substack{0< a < 2p \\ a \text{ odd}}} (2\sin \frac{a \pi}{2p})^{2m}$. Since $2\sin \frac{a \pi}{2p} = \frac{ e^{i \frac{a \pi}{2p}} - e^{-i \frac{a \pi}{2p}}  }{i}$ we have 
\begin{eqnarray*}
I &=& (-1)^m \sum_{\substack{0< a < 2p \\ a \text{ odd}}} \sum_{k=0}^{2m} (-1)^k \binom{2m}{k} e^{i \frac{(2m-k) a \pi}{2p}} e^{-i \frac{ka \pi}{2p}} \\
&=& (-1)^m \sum_{k=0}^{2m} (-1)^k \binom{2m}{k} \sum_{\substack{0< a < 2p \\ a \text{ odd}}}  e^{i \frac{(m-k) a \pi}{p}}.
\end{eqnarray*}
Note that if $p \nmid (k-m)$ then 
$$
\sum_{\substack{0< a < 2p \\ a \text{ odd}}}  e^{i \frac{(m-k) a \pi}{p}} = 
\frac{ e^{i \frac{(m-k) (2p+1)\pi}{p}} - e^{i \frac{(m-k) \pi}{p}}  }{ e^{i \frac{(m-k) 2 \pi}{p}}  - 1 } =0.
$$ 
Moreover, if $p \mid (k-m)$ then $\sum_{\substack{0< a < 2p \\ a \text{ odd}}}  e^{i \frac{(m-k) a \pi}{p}} = (-1)^{\frac{k-m}{p}} p$. Hence
\begin{eqnarray*}
I &=& \sum_{\substack{0 \le k \le 2m \\ p \mid (k-m)}} (-1)^{k-m} \binom{2m}{k} (-1)^{\frac{k-m}{p}} p \\
&=& p  \sum_{l=-\lfloor \frac{m}{p} \rfloor}^{\lfloor \frac{m}{p} \rfloor} (-1)^{(p+1)l}\binom{2m}{m + pl}. 
\end{eqnarray*}
This completes the proof of Lemma \ref{id}. 
\end{proof}

\subsubsection{Negative power sum}
An irreducible $\SL$-representation $\rho$ on the component $\mathcal{X}^*_{2, (a,b)}$ is acyclic if and only if $a$ and $b$ are  odd. In which case, its Reidemeister torsion  is given by 
$$
\tau_\rho :=
\tau_\rho(E(K_{p,q})) = \frac{1} { 4  \sin^2 \frac{a\pi}{2p} \sin^2 \frac{b\pi}{2q}}.
$$ 
See \cite{Joh, Yamaguchi13-1}. 

For a fixed complex number $c\in \C$ (different from $2 \cos \pi (r a/p \pm sb/q)$ for all pairs $(a,b)$), there is exactly one conjugacy class $[\rho] \in \mathcal{X}^*_{2, (a,b)}$ such that $\tr \rho(\mu) = c$ for each pair $(a,b)$. Then the negative power sum of Reidemeister torsions of acyclic $[\rho] \in \mathcal{X}^*_2$ with fixed $\tr \rho(\mu)=c$ is equal to 
$$
\sum_{\substack {\text{acyclic }[\rho] \in \mathcal{X}^*_2 \\ \tr \rho(\mu)=c}} \frac{1}{(\tau_\rho)^{m}} = 
\left( \sum_{\substack{0< a < p \\ a \text{ odd}}}  (2\sin^2 \frac{a\pi}{2p})^m \right) \left( \sum_{\substack{0< b < q \\ b \text{ odd}}}  (2\sin^2 \frac{b\pi}{2q})^m \right)
$$
where $m$ is a non-negative integer.

Let $\varepsilon_p = p \pmod{2} \in \{0,1\}$. Since $\sin \frac{(2p-a)\pi}{2p} = \sin \frac{a\pi}{2p}$, by Lemma \ref{id} we have 
\begin{eqnarray*}
2 \sum_{\substack{0< a < p \\ a \text{ odd}}}  (2\sin^2 \frac{a\pi}{2p})^m &=& \sum_{\substack{0< a < 2p \\ a \text{ odd}}}  (2\sin^2 \frac{a\pi}{2p})^m - 2^m \varepsilon_p  \\
&=& \frac{p}{2^m}  \sum_{l=-\lfloor \frac{m}{p} \rfloor}^{\lfloor \frac{m}{p} \rfloor}  (-1)^{(p+1)l} \binom{2m}{m + pl} - 2^m \varepsilon_p.
\end{eqnarray*}
This implies that 
$$
 \sum_{\substack{0< a < p \\ a \text{ odd}}}  (2\sin^2 \frac{a\pi}{2p})^m =  \frac{p}{2^{m+1}}  \sum_{l=-\lfloor \frac{m}{p} \rfloor}^{\lfloor \frac{m}{p} \rfloor} (-1)^{(p+1)l} \binom{2m}{m + pl} - 2^{m-1} \varepsilon_p \in \frac{1}{2^m}\Z,
$$
because $(-1)^{(p+1)l}\begin{pmatrix}2m\\m+pl\end{pmatrix}=
(-1)^{-(p+1)l}\begin{pmatrix}2m\\m-pl\end{pmatrix}$ and 
$\begin{pmatrix}2m\\m\end{pmatrix} = 2 \begin{pmatrix}2m-1\\m-1\end{pmatrix} \in2\Z$ hold. 
Hence 
\begin{eqnarray*}
\sum_{\substack {\text{acyclic } [\rho] \in \mathcal{X}^*_2 \\ \tr \rho(\mu)=c}}  \frac{1}{(\tau_\rho)^{m}} &=& \left( \frac{p}{2^{m+1}}  \sum_{l=-\lfloor \frac{m}{p} \rfloor}^{\lfloor \frac{m}{p} \rfloor} (-1)^{(p+1)l} \binom{2m}{m + pl} - 2^{m-1} \varepsilon_p \right) \\
&& \times \left( \frac{q}{2^{m+1}}  \sum_{l=-\lfloor \frac{m}{q} \rfloor}^{\lfloor \frac{m}{q} \rfloor}  (-1)^{(q+1)l} \binom{2m}{m + ql} - 2^{m-1} \varepsilon_q \right)
\in\frac{1}{4^m}\Z.
\end{eqnarray*}
In particular, the inverse sum ($m=1$) of Reidemeister torsions of acyclic $[\rho] \in \mathcal{X}^*_2$ with fixed $\tr \rho(\mu)$  is equal to $(\frac{p}{2} -\varepsilon_p) (\frac{q}{2} -\varepsilon_q)$. As a special case, when $q=2$ (and $p$ is odd), $K_{p,2}$ is the two-bridge knot corresponding to the fraction $p/1$ and by Remark 2.18 of \cite{MY25-1}, the inverse sum of Reidemeister torsions of parabolic representations ($\tr \rho(\mu)=2$) is equal to $\frac{p-2}{2}$. 
Moreover, since $\sum_{\substack{0< b < q \\ b \text{ odd}}}  (2\sin^2 \frac{b\pi}{2q})^m = 1$
holds for $q=2$, we obtain 
\begin{eqnarray*}
\sum_{\substack {\text{acyclic } [\rho] \in \mathcal{X}^*_2 \\ \tr \rho(\mu)=c}}  \frac{1}{(\tau_\rho(E(K_{p,2}))^{m}} 
&=& \sum_{\substack{0< a < p \\ a \text{ odd}}}  (2\sin^2 \frac{a\pi}{2p})^m \\
&=&
\frac{p}{2^{m+1}}  \sum_{l=-\lfloor \frac{m}{p} \rfloor}^{\lfloor \frac{m}{p} \rfloor} (-1)^{(p+1)l} \binom{2m}{m + pl} - 2^{m-1} \in\frac{1}{2^m}\Z,
\end{eqnarray*}
because $\varepsilon_p=1$. 

\subsubsection{Positive power sums} We will express positive power sums of Reidemeister torsions in terms of ranks of certain modules appearing in TQFT (see \cite{BHMV95-1}). We have
$$
\sum_{\substack {\text{acyclic }[\rho] \in \mathcal{X}^*_2 \\ \tr \rho(\mu)=c}}  (\tau_\rho)^{m} = 
 \sum_{\substack{0< a < p \\ a \text{ odd}}}  \left( \frac{1}{2\sin^2 \frac{a\pi}{2p}} \right)^m  \sum_{\substack{0< b < q \\ b \text{ odd}}}  \left( \frac{1}{2\sin^2 \frac{b\pi}{2q}} \right)^m.
$$
Corollary 1.16 in \cite{BHMV95-1} says that for $m \ge -1$ we have \textit{Verlinde's formula}  
$$
\text{rank } V_p(\Sigma_{m+1}) = \ \sum_{\substack{0< a < p \\ a \text{ even}}} \left( \frac{p}{4\sin^2 \frac{a\pi}{p}} \right)^m,
$$
where $V_p(\Sigma_{m+1}),\,p\geq3$, denotes the module over a commutative ring with unit and conjugation associated to a closed surface of genus $m+1$ equipped with the empty link. These modules appear in the framework of TQFT derived from the Kauffman bracket (see \cite{BHMV95-1} for the precise definition and its properties). 
In particular, $
\text{rank } V_{2p}(\Sigma_{m+1}) = \sum_{0 < a < p } \left( \frac{p}{2\sin^2 \frac{a\pi}{p}} \right)^m$. Then
\begin{eqnarray*}
 \sum_{\substack{0< a < p \\ a \text{ odd}}}  \left( \frac{p}{4\sin^2 \frac{a\pi}{p}} \right)^m &=& \sum_{0 < a < p } \left( \frac{p}{4\sin^2 \frac{a\pi}{p}} \right)^m -  \sum_{\substack{0< a < p \\ a \text{ even}}} \left( \frac{p}{4\sin^2 \frac{a\pi}{p}} \right)^m \\
&=& 2^{-m} \text{rank } V_{2p}(\Sigma_{m+1}) - \text{rank } V_p(\Sigma_{m+1}).
\end{eqnarray*}
This implies that 
\begin{eqnarray*}
2  \sum_{\substack{0< a < p \\ a \text{ odd}}} \left( \frac{1}{2\sin^2 \frac{a\pi}{2p}} \right)^m &=&  \sum_{\substack{0< a < 2p \\ a \text{ odd}}}  \left( \frac{1}{2\sin^2 \frac{a\pi}{2p}} \right)^m - 2^{-m} \varepsilon_p  \\
&=& (2p)^{-m} \text{rank } V_{4p}(\Sigma_{m+1}) - p^{-m}  \text{rank } V_{2p}(\Sigma_{m+1}) - 2^{-m} \varepsilon_p.
\end{eqnarray*}
Hence, for $m \ge -1$ we obtain
\begin{eqnarray*}
&& \sum_{\substack {\text{acyclic }[\rho] \in \mathcal{X}^*_2 \\ \tr \rho(\mu)=c}}    (\tau_\rho)^{m} \\
&=& \frac{1}{4} \left( (2p)^{-m} \text{rank } V_{4p}(\Sigma_{m+1}) - p^{-m}  \text{rank } V_{2p}(\Sigma_{m+1}) - 2^{-m} \varepsilon_p\right) \\
&& \times \left( (2q)^{-m} \text{rank } V_{4q}(\Sigma_{m+1}) - q^{-m}  \text{rank } V_{2q}(\Sigma_{m+1}) - 2^{-m} \varepsilon_q \right)
\in \frac{1}{4(4pq)^m}\Z.
\end{eqnarray*}

Note that $\text{rank } V_{p}(\Sigma_{0})=1$. If $m=-1$, we again conclude that the inverse sum of torsions is equal to $(\frac{p}{2} -\varepsilon_p) (\frac{q}{2} -\varepsilon_q)$.

\subsection{Adjoint Reidemeister torsions}
We prepare  the following lemma. 

\begin{lemma} \label{id1}
For $p \ge 2$ and $m \ge 0$ we have
$$
\sum_{0< a < p} (4\sin^2 \frac{a \pi}{p})^m = p  \sum_{l=-\lfloor \frac{m}{p} \rfloor}^{\lfloor \frac{m}{p} \rfloor}  (-1)^{pl} \binom{2m}{m +pl}.
$$
\end{lemma}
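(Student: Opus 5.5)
The plan mirrors the proof of Lemma \ref{id}. Set $J=\sum_{0<a<p}(2\sin\frac{a\pi}{p})^{2m}$. Since the $a=0$ term of the corresponding full sum over $0\le a<p$ vanishes when $m\ge1$, we can write $J=\sum_{a=0}^{p-1}(2\sin\frac{a\pi}{p})^{2m}$ for $m\ge 1$. The case $m=0$ is checked directly: the left side is $p-1$, while the right side is $p\binom{0}{0}=p$. So the statement as written presumably requires $m\ge1$, or else it should be read with the sum including $a=0$; I would note this discrepancy and restrict to $m\ge1$.

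Write $2\sin\frac{a\pi}{p}=\frac{e^{i a\pi/p}-e^{-ia\pi/p}}{i}$ and expand binomially:
\[
J=(-1)^m\sum_{k=0}^{2m}(-1)^k\binom{2m}{k}\sum_{a=0}^{p-1}e^{2i(m-k)a\pi/p}.
\]
The inner sum is a full sum of $p$-th roots of unity. It equals $p$ when $p\mid (k-m)$ and $0$ otherwise, by the geometric series argument already used in Lemma \ref{id}.

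Hence only the terms with $k=m+pl$ survive, where $|l|\le\lfloor m/p\rfloor$ because $0\le k\le 2m$. Each such term carries the sign $(-1)^m(-1)^{k}=(-1)^{k-m}=(-1)^{pl}$. This gives
\[
J=p\sum_{l=-\lfloor m/p\rfloor}^{\lfloor m/p\rfloor}(-1)^{pl}\binom{2m}{m+pl}.
\]

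The only delicate point is the bookkeeping around $a=0$ and $m=0$ described above. The rest is routine, and in fact simpler than Lemma \ref{id}, because there is no extra $(-1)^l$ factor coming from restricting to odd $a$.
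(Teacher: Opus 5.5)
Your argument is correct and follows the route the paper intends: the paper says only that the proof is similar to that of Lemma~\ref{id}, i.e.\ a binomial expansion of $(2\sin\frac{a\pi}{p})^{2m}$ followed by a roots-of-unity filter. Your caveat is also right: for $m=0$ the left side is $p-1$ while the right side is $p$, so the identity genuinely requires $m\ge 1$, where adding the vanishing $a=0$ term turns the inner sum into a complete sum over $p$-th roots of unity; the paper's statement overlooks this.
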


\begin{proof}
The proof is similar to that of Lemma \ref{id}.
\end{proof}

Let us recall that the adjoint Reidemeister torsion of the torus knot $K=K_{p,q}$ with respect to the meridian $\mu$ is given by 
$$
\tau_{\mathrm{ad}\circ\rho} := \tau_{\mathrm{ad} \circ \rho}(K,\mu)= \pm \frac{pq} {16\sin^2 \frac{a\pi}{p} \sin^2 \frac{b\pi}{q}}$$ 
where $0<a<p$, $0<b<q$, $a \equiv b \pmod{2}$ and $[\rho] \in \mathcal{X}^*_{2, (a,b)}$. See \cite[Section 6]{Dub} and \cite[Section 7]{TY21-1}. 

\subsubsection{Negative power sum}
For a fixed complex number $c\in\C$ (different from $2 \cos \pi (r a/p \pm sb/q)$ for all pairs $(a,b)$), there is exactly one conjugacy class $[\rho] \in \mathcal{X}^*_{2, (a,b)}$ such that $\tr \rho(\mu) = c$ for each pair $(a,b)$. Then the negative power sum of adjoint Reidemeister torsions with respect to $\mu$ of $[\rho] \in \mathcal{X}^*_2$ with fixed $\tr \rho(\mu) = c$ is equal to 
\begin{eqnarray*}
\pm \sum_{\substack {[\rho] \in \mathcal{X}^*_2 \\ \tr \rho(\mu)=c}}  \frac{1}{(\tau_{\mathrm{ad} \circ \rho})^m} &=&\sum_{\substack{0< a < p \\ a \text{ even}}}  \left( \frac{4\sin^2 \frac{a\pi}{p}} {p} \right)^{m} \sum_{\substack{0< b < q \\ b \text{ even}}} \left( \frac{4\sin^2 \frac{b\pi}{p}} {q} \right)^{m} \\
&& + \, \sum_{\substack{0< a < p \\ a \text{ odd}}} \left( \frac{4\sin^2 \frac{a\pi}{p}} {p}\right)^{m} \sum_{\substack{0< b < q \\ b \text{ odd}}}  \left( \frac{4\sin^2 \frac{b\pi}{q}} {q}\right)^{m}. 
\end{eqnarray*}

Without loss of generality, we can assume that $p$ is odd. Note that $a$ is odd if and only if $p-a$ is even. Since $\sin \frac{a \pi}{p} = \sin \frac{(p-a) \pi}{p}$ we obtain
$$
\sum_{\substack{0< a < p \\ a \text{ odd}}}  (4\sin^2 \frac{a \pi}{p})^m = \sum_{\substack{0< a < p \\ a \text{ even}}} (4\sin^2 \frac{a \pi}{p})^m  = \frac{1}{2}  \sum_{0< a < p} (4\sin^2 \frac{a \pi}{p})^m.
$$
Hence, by Lemma \ref{id1} we have
\begin{eqnarray*}
&& \pm \sum_{\substack {[\rho] \in \mathcal{X}^*_2 \\ \tr \rho(\mu)=c}} \frac{1}{(\tau_{\mathrm{ad} \circ \rho})^m} \\
&=& \frac{1}{2}  \sum_{0< a < p} \left( \frac{4\sin^2 \frac{a\pi}{p}} {p} \right)^{m} \left(  \sum_{\substack{0< b < q \\ b \text{ even}}} \left( \frac{4\sin^2 \frac{b\pi}{p}} {q} \right)^{m} +\sum_{\substack{0< b < q \\ b \text{ odd}}} \left( \frac{4\sin^2 \frac{b\pi}{q}} {q}\right)^{m} \right) \\
&=& \frac{1}{2}  \sum_{0< a < p} \left( \frac{4\sin^2 \frac{a\pi}{p}} {p} \right)^{m}  \sum_{0< b < q} \left( \frac{4\sin^2 \frac{b\pi}{p}} {q} \right)^{m}  \\
&=& \frac{1}{2(pq)^{m-1}} \sum_{l=-\lfloor \frac{m}{p} \rfloor}^{\lfloor \frac{m}{p} \rfloor}  (-1)^{pl} \binom{2m}{m +pl} \sum_{l=-\lfloor \frac{m}{p} \rfloor}^{\lfloor \frac{m}{p} \rfloor}  (-1)^{ql} \binom{2m}{m +ql}
\in\frac{2}{(pq)^{m-1}}\Z.
\end{eqnarray*}
In particular the inverse sum ($m=1$) of adjoint Reidemeister torsions with respect to $\mu$ is equal to
$\pm 2$ for generic $\tr \rho(\mu)$. This is Theorem 7.4 of \cite{TY21-1}. 

\begin{remark}
Let $\lambda = x^p \mu^{-pq}$ be the canonical longitude of $K=K_{p,q}$ corresponding the meridian $\mu = x^{-r} y^s$ where $ps-qr=1$. As in \cite[Section 7]{TY21-1}, given any closed curve $\gamma = \mu^a \lambda^b$ with $a-pqb \not=0$, we can also consider the negative power sum of adjoint Reidemeister torsions with respect to $\gamma$ of $[\rho] \in \mathcal{X}^*_2$ when $\tr \rho(\gamma) = c$ is fixed. Then for $m \ge 0$ and generic $c$, we obtain 
\begin{eqnarray*}
&& \pm \sum_{\substack {[\rho] \in \mathcal{X}^*_2 \\ \tr \rho(\gamma)=c}} \frac{1}{(\tau_{\mathrm{ad} \circ \rho}(K,\gamma))^m} \\
&=& \frac{1}{|a-pqb |^{m-1}}\frac{1}{2(pq)^{m-1}} \sum_{l=-\lfloor \frac{m}{p} \rfloor}^{\lfloor \frac{m}{p} \rfloor}  (-1)^{pl} \binom{2m}{m +pl} \sum_{l=-\lfloor \frac{m}{p} \rfloor}^{\lfloor \frac{m}{p} \rfloor}  (-1)^{ql} \binom{2m}{m +ql}.
\end{eqnarray*}
\end{remark}

\subsubsection{Positive power sums} We have
\begin{eqnarray*}
\pm \sum_{\substack {[\rho] \in \mathcal{X}^*_2 \\ \tr \rho(\mu)=c}}  (\tau_{\mathrm{ad} \circ \rho})^m  &=& \sum_{\substack{0< a < p \\ a \text{ even}}}   \left( \frac{p}{4\sin^2 \frac{a\pi}{p}} \right)^m \sum_{\substack{0< b < q \\ b \text{ even}}}  \left( \frac{q}{4\sin^2 \frac{b\pi}{p}} \right)^m \\
&& + \, \sum_{\substack{0< a < p \\ a \text{ odd}}}   \left( \frac{p}{4\sin^2 \frac{a\pi}{p}} \right)^m \sum_{\substack{0< b < q \\ b \text{ even}}}   \left( \frac{q}{4\sin^2 \frac{b\pi}{p}} \right)^m. 
\end{eqnarray*}
Without loss of generality, we can assume that $p$ is odd. Then 
$$
\sum_{\substack{0< a < p \\ a \text{ odd}}}  \left( \frac{p}{4\sin^2 \frac{a\pi}{p}} \right)^m = \sum_{\substack{0< a < p \\ a \text{ even}}} \left( \frac{p}{4\sin^2 \frac{a\pi}{p}} \right)^m = \text{ rank } V_p(\Sigma_{m+1}).
$$
Hence
\begin{eqnarray*}
&& \pm \sum_{\substack {[\rho] \in \mathcal{X}^*_2 \\ \tr \rho(\mu)=c}}  (\tau_{\mathrm{ad} \circ \rho})^m \\
&=& \text{ rank } V_p(\Sigma_{m+1}) \left( \sum_{\substack{0< b < q \\ b \text{ even}}}  \left( \frac{q}{4\sin^2 \frac{b\pi}{p}} \right)^m + \sum_{\substack{0< b < q \\ b \text{ odd}}} \left( \frac{q}{4\sin^2 \frac{b\pi}{p}} \right)^m \right) \\
&=& \text{rank } V_p(\Sigma_{m+1}) \sum_{0 < b < q} \left( \frac{q}{4\sin^2 \frac{b\pi}{q}} \right)^m \\
&=& 2^{-m} \text{rank } V_p(\Sigma_{m+1})   \text{ rank } V_{2q}(\Sigma_{m+1})\in\frac{1}{2^m}\Z.
\end{eqnarray*}
 
In particular, we obtain 
$\sum_{\substack {[\rho] \in \mathcal{X}^*_2 \\ \tr \rho(\mu)=c}}  (2\tau_{\mathrm{ad} \circ \rho})^m\in\Z$, which answers Conjecture 3.1 in \cite{GKY21-1} for the case of torus knots. 

Note that if $q$ is also odd, then $\text{rank } V_{2q}(\Sigma_{m+1}) = 2^{m+1} \text{rank } V_q(\Sigma_{m+1})$ and so 
$$
\pm \sum_{\substack {[\rho] \in \mathcal{X}^*_2 \\ \tr \rho(\mu)=c}}  (\tau_{\mathrm{ad} \circ \rho})^m = 2 \text{ rank } V_p(\Sigma_{m+1}) \text{ rank } V_q(\Sigma_{m+1})\in 2\Z.
$$

\subsection*{Acknowledgments}
The authors would like to thank the anonymous referee for useful suggestions. 
The first author has been supported by 
JSPS KAKENHI Grant Number JP25K07012. The second author has been supported by a grant from the Simons Foundation (\#708778).


\end{document}